\title{Saturated free algebras revisited}
\date{\today}
\author{Anand Pillay\\University of Notre Dame
  \and Rizos Sklinos\\University Lyon 1 }
  \newtheorem{Theorem}{Theorem}[section]
\newtheorem{Proposition}[Theorem]{Proposition}
\newtheorem{Definition}[Theorem]{Definition} 
\newtheorem{Remark}[Theorem]{Remark}
\newtheorem{Lemma}[Theorem]{Lemma}
\newtheorem{Corollary}[Theorem]{Corollary}
\newtheorem{Fact}[Theorem]{Fact}
\newtheorem{Example}[Theorem]{Example}
\newtheorem{Question}[Theorem]{Question}
\newcommand{\Q}{\mathbb Q}  
\newcommand{\Z}{\mathbb Z}
\begin{document}
\maketitle

\begin{abstract} 
We give an exposition of results of Baldwin-Shelah \cite{Baldwin-Shelah} on saturated free algebras, 
at the level of generality of complete first order theories $T$ with a saturated model $M$ which is in the algebraic closure of an indiscernible set. 
We then make some new observations when $M$ {\em is} a saturated free algebra, analogous to (more difficult) results for the free group, such as a description of forking. 
\end{abstract}

\section{Introduction}
This paper has its origin in joint discussions during the second author's work on his  Ph.D. thesis in Leeds. 
Although the topic of the thesis was the model theory of the free noncommutative group, 
we were interesting in analogies with the  much easier situation of saturated free algebras, 
which  had been studied in a paper of Baldwin and Shelah \cite{Baldwin-Shelah}. (But note that free groups, although stable, are never saturated.)
In Section 2 we recapitulate, with quick proofs, the main results of \cite{Baldwin-Shelah}, in the  more general model-theoretic context described in the abstract, 
which was already alluded to in \cite{Baldwin-Shelah}. These results consist of $\omega$-stability and  finite-dimensionality of $T$, 
and some refinements involving decompositions of suitable models of $T$ as the algebraic closure of Morley sequences in  weight one types.  
In Section 3, we  look in more detail at a basis (or free generating set) $I$ of a saturated free algebra, proving various 
results which are more specific to the case at hand and not necessarily valid at the level of generality of section 2. 
For example we prove that $I$ is a Morley sequence in a stationary type over $M$, and we describe forking in $M$ in terms of free decompositions. 
We also ask several questions, some of which may have easy answers. 
In section 4 we give a few  examples, mainly highlighting the distinction between the context of Section 2 and that of Section 3.


Our model theory notation is standard. For simplicity we will work throughout with countable languages and theories.  
If $L$ is a language consisting only of function symbols then we will call an $L$-structure an $L$-algebra. 
In that case, by a {\em variety} $V$ in the language $L$ (in the sense of universal algebra) we mean 
a class of $L$-algebras axiomatized by a collection of so-called {\em identities}, namely universal closures of expressions 
$t_{1}(\bar x) = t_{2}(\bar x)$ where $t_{1}, t_{2}$ are $L$-terms. 
Free algebras exist in $V$: the free algebra $F_{X}$ on generators $X$ is characterized by the 
property that $X$ generates $F_{X}$ as an algebra and any map from $X$ to an algebra $A\in V$  extends to a homomorphism from $F_{X}$ to $A$, necessarily unique. 
It is clear that any permutation of $X$ extends to an automorphism of $F_{X}$ whereby $X$ will be an {\em indiscernible set} in $F_{X}$ 
which of course generates $F_X$ under the terms of $L$.

Up to and including the 1970's there was considerable interaction between universal algebra and model theory, 
and it was natural for Baldwin and Shelah to study algebras $F$ which are both free (on some set of generators) and uncountably saturated (as first order structures). 
In the paper \cite{Baldwin-Shelah} a number of interesting structural results are proved about $Th(F)$, although the status of Theorem 2 there is unclear. 
As they already mention in their paper, all their structural results hold with appropriate modifications under the {\em weaker} 
assumption that $T$ is what we call below {\em almost indiscernible}, namely has a model $M$ which is 
uncountably saturated and in the algebraic closure of some indiscernible set. 
In any case, working in this slightly more general context of almost indiscernible theories, 
we give a quick account of the main lemmas of \cite{Baldwin-Shelah} and a correct but slightly weaker version of their Theorem 2. 

One would imagine on the other hand that there are model-theoretic or stability theoretic properties of saturated free algebras which 
are more specific and do not generalize to almost indiscernible theories, and sections 3 and 4 explore this topic.


Note that in the informal definition above of an almost indiscernible theory,  we say {\em indiscernible set}  not {\em indiscernible  sequence}. 
If we say rather {\em sequence} then this is a weaker notion which could be explored separately. 
In fact in \cite{Mariou} Benoist Mariou studies countable first order theories $T$ with a saturated model 
$M$ which has an expansion $M'$ in a countable language such that $M'$  itself is in the algebraic closure of an indiscernible sequence. 
Mariou proves that such a theory is $NIP$ (does not have the independence property) and moreover among stable theories the property 
characterizes the $\omega$-stable theories.  This whole topic is of course related to the old proof of $\omega$-stability of uncountably categorical theories, 
using Ehrenfeucht-Mostowski models.  

\vspace{2mm}
\noindent

In any case if $T$ is the theory of a free (uncountable) saturated algebra then $T$ is almost indiscernible and really 
the results in \cite{Baldwin-Shelah} were about such theories.  

So in section 2 we will study almost indiscernible theories and make reasonably free use of stability theoretic notions. 
Saharon Shelah invented stability theory,  and the fundamental notions of the subject trace back to him, although alternative expositions, proofs, 
and even definitions, have been developed  by others. At the time of the writing of \cite{Baldwin-Shelah}, Shelah's  \cite{Shelah} 
was the only source in book form for stability theory, although some other important and influential  papers were in circulation, 
such as \cite{Lascar-Poizat} and \cite{Makkai}. In the meantime several books on stability theory have appeared, and a common 
vocabulary and conceptual framework has been more or less established among practitioners of the subject. 
Chapter 1 of \cite{Pillay-GST} is devoted to a summary, with selected proofs, of stability theory, and we will in particular  use section 4 
(Miscellaneous  facts about stable theories), as a basic reference for the current paper. The reader might also wish 
to consult \cite{Baldwin}, \cite{Lascar-book} as well as the paper \cite{Pillay-nmd} which gives 
an exposition of the computation of the spectrum function for $\omega$-stable nonmultidimensional theories. 
In any case we complete this introduction with some facts about nonmultidimensional $\omega$-stable theories. 

{\bf We fix a complete $\omega$-stable theory $T$ and work in a big saturated model  (${\bar M}$ say).}
\newline
\noindent
 A {\em complete type} usually  refers to a complete type over some small subset of  ${\bar M}$. Sometimes we refer to global types which are complete types over ${\bar M}$.  
 Regular types are assumed, among other things, to be stationary.

\begin{Definition} 
Recall that $T$ is said to be nonmultidimensional (finite-dimensional) if there are only boundedly many (finitely many) regular types, up to nonorthogonality. 
\end{Definition} 

In fact for a general stable theory nonmultidimensionality is defined as any two nonalgebraic stationary types being nonorthogonal, 
which is equivalent to the definition above for superstable $T$. 

In \cite{Baldwin-Shelah} there are statements to the effect that arbitrary models of an $\omega$-stable finite-dimensional theory are 
prime over a finite union of indiscernible sequences related to regular types. We want to make this a little more precise (and correct).  
Let  $M_{0}$ denotes a copy of the prime model as an elementary substructure of ${\bar M}$.

\begin{Fact} Suppose $T$ is nonmultidimensional. Then up to nonorthogonality every regular type can be chosen 
as a type over $M_{0}$ (which moreover is strongly regular). 
\end{Fact}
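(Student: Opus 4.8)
The plan is, given a regular type $p$ (over an arbitrary small set of parameters), to produce a strongly regular type $q$ over $M_0$ with $q \not\perp p$; since the statement is only asserted up to nonorthogonality this suffices. Throughout I would use two standard facts about superstable theories: nonorthogonality is an equivalence relation on the class of regular types, and both regularity and the nonorthogonality class of a stationary type are preserved under nonforking extension. The strategy is first to move the nonorthogonality class of $p$ down to the prime model, and then to improve regularity to strong regularity without leaving $M_0$.

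For the first step I would invoke the structural description of nonmultidimensional superstable theories recorded in the references (e.g.\ Chapter~1, \S4 of \cite{Pillay-GST}): in such a theory every regular type is nonorthogonal to a regular type over $\operatorname{acl}^{eq}(\emptyset)$. Let $r_0$ be such a regular type with $r_0 \not\perp p$, and let $r \in S(M_0)$ be its nonforking extension to the prime model. Then $r$ is again regular and, orthogonality being preserved under nonforking extension, $r \not\perp p$. Thus already $p$ has a regular representative of its class over $M_0$.

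For the second step I would use that in an $\omega$-stable theory strongly regular types are dense over every model: every nonalgebraic type over a model is nonorthogonal to a strongly regular type over that same model. Applying this to $r \in S(M_0)$ produces a strongly regular $q \in S(M_0)$ with $q \not\perp r$. Since $p$, $r$ and $q$ are all regular, transitivity of nonorthogonality on regular types gives $q \not\perp r \not\perp p$, hence $q \not\perp p$. Therefore $q$ is a strongly regular type over $M_0$ lying in the nonorthogonality class of $p$, which is exactly what the statement demands.

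The main obstacle is the first step, and in particular the input cited there: passing from the hypothesis that there are only boundedly many regular types up to nonorthogonality to the conclusion that each class is already represented by a regular type over $\operatorname{acl}^{eq}(\emptyset)$. The underlying reason is that a regular type orthogonal to $\operatorname{acl}^{eq}(\emptyset)$ admits unboundedly many pairwise orthogonal conjugates, which would contradict nonmultidimensionality; this is precisely the equivalence of the two formulations of nonmultidimensionality quoted above, and I would cite it rather than reprove it here. A secondary point, which the plan is arranged to respect, is that every type occurring as a link in the argument must be kept regular and based on $M_0$ (or $\operatorname{acl}^{eq}(\emptyset)$), so that the failure of transitivity of nonorthogonality for arbitrary stationary types is never invoked.
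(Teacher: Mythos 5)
First, a point of context: the paper itself gives \emph{no} proof of this Fact --- it is quoted as known background, with the introduction pointing to \cite{Pillay-nmd} and to Chapter 1, Section 4 of \cite{Pillay-GST} --- so there is no argument of the authors to compare yours against, and your proposal must be judged on its own terms. On those terms it has a genuine gap, and the gap sits exactly where you have placed all the weight: the first step. The conjugation argument you sketch (a regular type orthogonal to $acl^{eq}(\emptyset)$ has unboundedly many pairwise orthogonal conjugates, contradicting nonmultidimensionality) proves only that every regular type $p$ is nonorthogonal to \emph{some} stationary type $r$ based on $acl^{eq}(\emptyset)$; it gives no control whatsoever on $r$, and in particular does not make $r$ regular. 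The stronger statement you cite --- that the witness can be taken \emph{regular} and based on $acl^{eq}(\emptyset)$ --- is not one of the standard facts in the quoted references: what the standard superstable machinery provides (Shelah's domination-decomposition into regular types, or nonorthogonality transferred to $a$-models) is a regular witness over an $\omega$-saturated ($a$-)model, not over $acl^{eq}(\emptyset)$ and not over the prime model $M_{0}$, which in general is not $\omega$-saturated. Your steps 2--4 cannot repair this, for the very reason you flag at the end: once the middle link is not known to be regular, transitivity of nonorthogonality is unavailable, so from $p \not\perp r|M_{0}$ and $r|M_{0} \not\perp q$ (with $q$ strongly regular over $M_{0}$) one cannot conclude $p \not\perp q$.

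Put differently, the entire content of the Fact is the step you have outsourced to a citation: passing from ``$p$ is nonorthogonal to \emph{some} type over $M_{0}$'' (which your sketch does establish, via parallelism-invariance of orthogonality) to ``$p$ is nonorthogonal to a regular, indeed strongly regular, type over $M_{0}$.'' This is precisely where $\omega$-stability, rather than superstability, is used in the literature: one works with the prime model over $M_{0}$ together with a realization, produces a strongly regular type by minimizing Morley rank and degree among types of new elements, and --- using the regularity of $p$ itself --- arranges that the new element is chosen ``in the direction of $p$,'' so that nonorthogonality to $p$ is not lost; this is the content of the treatment in \cite{Pillay-nmd}. (A caution showing the care needed here: since $M_{0}$ is not an $a$-model, the prime model over $M_{0}\cup\{b\}$ need not be dominated by $b$ over $M_{0}$ --- contrast Remark 1.10 --- so the prime model can realize types orthogonal to $tp(b/M_{0})$, and a carelessly chosen strongly regular type over $M_{0}$ may well be orthogonal to $p$.) Your architecture (descend the base, then use density of strongly regular types plus transitivity among regular types) is a reasonable shape for the argument, and the bookkeeping in steps 2--4 is fine as far as it goes, but as written the load-bearing claim is neither proved nor properly citable, so the proposal does not constitute a proof.
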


{\bf We now assume in addition that $T$ is nonmultidimensional. }\
\newline
\noindent
Let now $p_{i}(x)$ for $i\in I$ be a list of regular types over $M_{0}$, up to nonorthogonality, and let $a_{i}$ 
be a finite tuple from $M_{0}$ such that $p_{i}$ is definable over $a_{i}$. 
Let $(p_{i})_{a_{i}}(x)$ be the restriction of $p_{i}(x)$ to $a_{i}$. Note that $|I|\leq \omega$ (as for example $S(M_{0})$ is countable). 

\begin{Fact} 
Let $M$ be any elementary substructure of ${\bar M}$. Assume that $M$ contains $a_{i}$ for each $i$. 
Let $J_{i}$ be a maximal independent set of realizations of $(p_{i})_{a_{i}}$ in $M$, for each $i$. 
Then $M$ is prime (and minimal) over $\bigcup_{i}a_{i} \cup\bigcup_{i\in I} J_{i}$.
\end{Fact}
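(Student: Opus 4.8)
The plan is to reduce the statement to the assertion that $M$ is itself (a copy of) the prime model over $A := \bigcup_i a_i \cup \bigcup_i J_i$. Since $T$ is $\omega$-stable, a prime model $N$ over $A$ exists, is constructible, and is unique up to isomorphism over $A$; moreover constructible models are minimal over their base, so once we identify $M$ with $N$ the minimality clause is automatic. As $M$ is a model containing $A$, primeness of $N$ yields an elementary embedding of $N$ into $M$ over $A$, and we may assume $N \preceq M$. Everything thus comes down to proving $N = M$.

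The engine of the argument is that, for each $i$, the type $(p_i)_{a_i}$ is realized in $M$ no more often than it already is inside $N$. Write $p_i^N$ for the nonforking extension of $(p_i)_{a_i}$ to $N$, of which $p_i$ is the canonical representative; it is a regular nonalgebraic type over $N$ which does not fork over $a_i$. First I would show that $p_i^N$ is omitted in $M$. Indeed, if some $d \in M$ realized $p_i^N$, then $d$ would realize $(p_i)_{a_i}$ and be independent from $N$ over $a_i$, hence independent from $J_i \subseteq N$ over $a_i$; since $d \notin N \supseteq J_i$ (a nonalgebraic complete type over $N$ is not realized in $N$), the set $J_i \cup \{d\}$ would be a strictly larger $a_i$-independent set of realizations of $(p_i)_{a_i}$ in $M$, contradicting the maximality of $J_i$. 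So no $p_i^N$ is realized in $M$.

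Next, suppose toward a contradiction that $N \neq M$. By the standard fact that in a superstable theory a proper elementary extension of a model realizes a regular type over that model, there is $c \in M \setminus N$ with $r := \mathrm{tp}(c/N)$ regular. Since $T$ is nonmultidimensional and the $p_i$ list all regular types up to nonorthogonality, $r$ is nonorthogonal to some $p_j$, hence to $p_j^N$. At this point I would invoke the transfer principle for nonorthogonal regular types over a fixed model: because $r$ is realized in $M$ and $r$ is nonorthogonal to the regular type $p_j^N$, the type $p_j^N$ must itself be realized in $M$. This contradicts the second paragraph, so $N = M$, whence $M$ is prime and minimal over $A$.

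The main obstacle is exactly this last transfer step. Since $M$ is an \emph{arbitrary} elementary substructure it need not be $a$-saturated, which is the setting in which the theory of dimensions of regular types is cleanest and in which nonorthogonality of regular types over the base is detected by dependent realizations over that base. To handle a general $M$ one uses the domination and weight machinery for regular types together with the \emph{strong} regularity of the $p_i$ (available from the Fact preceding the statement): strong regularity is what forces a realization of $p_j^N$ to appear already in $M$, and not merely in some $a$-saturated elementary extension, once a nonorthogonal regular type is realized there. This is the single point at which the finer structure theory of $\omega$-stable nonmultidimensional theories developed in the references is genuinely needed; the remaining steps are the soft reduction described above.
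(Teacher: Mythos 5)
First, a remark on the comparison itself: the paper gives no proof of this statement at all --- it is quoted as a ``Fact'' from the structure theory of nonmultidimensional $\omega$-stable theories (the material expounded in \cite{Pillay-nmd} and Chapter 1 of \cite{Pillay-GST}). So your proposal can only be measured against that standard argument, and indeed your skeleton is the standard one: take a prime model $N$ over $A$ inside $M$, show each nonforking extension $p_i^N$ is omitted in $M$ by maximality of $J_i$ (this step of yours is correct and complete), and derive a contradiction from $N \neq M$ via a regular type realized in $M\setminus N$. The problem is that your proof stops exactly where the real work begins. The ``transfer principle'' you invoke --- if $r\in S(N)$ is regular, realized in $M\succ N$, and nonorthogonal to the regular type $p_j^N$, then $p_j^N$ is realized in $M$ --- is \emph{false} in general for merely regular types over a model $N$ that is not an $a$-model: nonorthogonality of regular types over such an $N$ need not be witnessed by dependent realizations over $N$, and realization need not transfer. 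You acknowledge this in your final paragraph, but the acknowledgment is not a proof; you never state the correct lemma, prove it, or cite it precisely. What the standard argument actually uses is the $\omega$-stable theory of \emph{strongly} regular types (due essentially to Lascar, and found in \cite{Lascar-book} and \cite{Pillay-GST}): (a) in an $\omega$-stable theory one can choose $c\in M\setminus N$ with $tp(c/N)$ strongly regular (minimize Morley rank and degree), not just regular; (b) nonorthogonal strongly regular types over the same model are Rudin--Keisler equivalent, i.e.\ the prime model over $N$ together with a realization of one contains a realization of the other, which one then finds inside $M$; and (c) one must check that $p_j^N$, the nonforking extension of $(p_j)_{a_j}$ to $N$, is itself strongly regular --- this is not automatic, since $N$ need not contain $M_0$, and requires preservation of strong regularity under parallelism. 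None of (a), (b), (c) appears in your write-up, so the proof has a genuine gap at its central step.

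A second, smaller but real error: you claim that ``constructible models are minimal over their base, so once we identify $M$ with $N$ the minimality clause is automatic.'' This is false --- prime (constructible) models need not be minimal over their base; e.g.\ in the theory of an infinite set with no structure, the prime model over $\emptyset$ is constructible but has many proper elementary substructures. The minimality asserted in the Fact is not a formal consequence of primeness; it comes from the maximality of the $J_i$: one runs your omission-plus-regular-type argument with an \emph{arbitrary} elementary substructure $M'\preceq M$ containing $A$ in place of the prime $N$, concluding $M'=M$. Your own machinery delivers this once the transfer step is repaired, but the justification you gave for minimality is wrong as stated.
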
 

The cardinality of $J_{i}$ in $M$ depends only on $M$ and $(p_{i})_{a_{i}}$ and is written $dim((p_{i})_{a_{i}},M)$. 

\begin{Remark} 
The condition that $M$ contains all the $a_{i}$ is minor. 
In general $M$ contains an isomorphic (elementary) copy $M_{0}'$ of the prime model $M_{0}$, 
and so will contain $a_{i}'$ for $i\in I$ such that $tp((a_{i}':i\in I)) = tp((a_{i}:i\in I))$. 
Then work instead with the ``copies" of the $(p_{i})_{a_{i}}$'s over the $a_{i}'$. 
When it comes to counting models, it becomes important to note that if $a_{i}$ and $a_{i}'$ have the same strong type over $\emptyset$, 
then $(p_{i})_{a_{i}}$ and its copy over $a_{i}'$ have the same ``dimension" in an model containing both $a_{i}$ and $a_{i}'$. 
\end{Remark}

The notion of an ``$a$-model"  (see Definition 4.2.2 of Chapter 1 of \cite{Pillay-GST}) 
is important and for $\omega$-stable $T$ coincices with an $\omega$-saturated model.

\begin{Lemma} 
Let $M$ be as in Fact 1.3. Then $M$ is $\omega$-saturated iff $|J_{i}|$ is infinite for each $i\in I$.
\end{Lemma}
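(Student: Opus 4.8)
The plan is to route everything through the notion of an \emph{$a$-model}: since $T$ is $\omega$-stable, the excerpt records that $M$ is $\omega$-saturated iff $M$ is an $a$-model, so it suffices to characterise when $M$ is an $a$-model in terms of the $J_i$. I will treat the two implications separately; the left-to-right direction is soft, while the right-to-left direction carries the real content. Throughout I use that each $(p_i)_{a_i}$ is regular (inherited from strong regularity of $p_i$) and that $\dim((p_i)_{a_i},M)=|J_i|$ by definition.

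For the forward direction, assume $M$ is $\omega$-saturated and fix $i\in I$. The type $(p_i)_{a_i}$ is nonalgebraic over the finite set $a_i\subseteq M$, so I would build by recursion a sequence $(c_n)_{n<\omega}$ in $M$ with each $c_n$ realising the nonforking extension of $(p_i)_{a_i}$ over $a_ic_0\cdots c_{n-1}$; this extension lives over a finite subset of $M$ and is nonalgebraic, hence is realised in $M$ by $\omega$-saturation. The resulting sequence is an infinite independent set of realisations of $(p_i)_{a_i}$ in $M$, whence $|J_i|\geq\aleph_0$.

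For the converse, assume every $|J_i|$ is infinite. I will invoke the standard characterisation (section~4 of Chapter~1 of \cite{Pillay-GST}) that a model is an $a$-model precisely when $\dim(r,M)\geq\aleph_0$ for every regular type $r$ over a finite subset of $M$. Given such an $r$, since the $p_i$ list all regular types up to nonorthogonality, $r$ is nonorthogonal to some $(p_i)_{a_i}$, and the task reduces to transferring the infinitude of $\dim((p_i)_{a_i},M)$ to $r$.

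This transfer is the main obstacle. As $r$ and $(p_i)_{a_i}$ are nonorthogonal regular types over finite subsets of $M$, their nonforking extensions fail to be weakly orthogonal over some finite set, which — since $M$ is a model containing the relevant parameters — I can take to be a finite $C\subseteq M$ containing $\mathrm{dom}(r)\cup a_i$. Writing $r'$ and $q'$ for the nonforking extensions of $r$ and $(p_i)_{a_i}$ to $C$, the failure of weak orthogonality together with regularity gives, for each $d\models q'$, some $c\in\mathrm{acl}(Cd)$ with $c\models r'$. Because $\dim((p_i)_{a_i},M)$ is infinite while $C$ is finite, $M$ contains an infinite $C$-independent set $\{d_n : n<\omega\}$ of realisations of $q'$; choosing $c_n\in\mathrm{acl}(Cd_n)\subseteq M$ with $c_n\models r'$ yields infinitely many realisations of $r$ in $M$ that are independent over $C$, their independence following from that of the $d_n$ together with $c_n\in\mathrm{acl}(Cd_n)$. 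Hence $\dim(r,M)\geq\aleph_0$, and the cited characterisation shows $M$ is an $a$-model, i.e.\ $\omega$-saturated. The points needing care are locating the witnessing set $C$ inside $M$ and checking that enlarging the base from $a_i$ to the finite set $C$ preserves the infinitude of the dimension.
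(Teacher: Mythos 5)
Your left-to-right direction is fine (the paper treats it as implicit), but the right-to-left direction---which carries all the content---has two genuine gaps. First, the ``standard characterisation'' you invoke, that a model of an $\omega$-stable theory is an $a$-model exactly when every regular type over a finite subset has infinite dimension in it, is not one of the facts recorded in Section 4 of Chapter 1 of \cite{Pillay-GST}: that section defines $a$-models and develops weight, domination and orthogonality, but gives no dimension-theoretic criterion for $a$-saturation. Worse, in the present setting (nonmultidimensional $T$, the $(p_i)_{a_i}$ enumerating all regular types up to nonorthogonality, $M\supseteq\bigcup_i a_i$), that characterisation is at least as strong as the lemma you are trying to prove, so quoting it as a black box begs the question. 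The paper supplies exactly this missing content by a different mechanism: given a finite tuple $b\in M$ and a type $r(y,b)$, it uses Fact 1.3 to place $b$ inside a \emph{countable} elementary submodel $M'$ of $M$ prime over $\bigcup_i a_i\cup\bigcup_i J_i'$ with each $J_i'\subseteq J_i$ countably infinite; by uniqueness of prime models and uniqueness of the countable $\omega$-saturated model, $M'\cong M_\omega$, so $M'$ (hence $M$) realises $r(y,b)$.

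Second, your transfer step rests on a false claim: that if $r'$ and $q'$ are nonorthogonal regular types over $C$, then \emph{every} $d\models q'$ admits some $c\in\mathrm{acl}(Cd)$ realising $r'$. Nonorthogonality of regular types yields forking (indeed domination) relations between realisations, not algebraic ones; the inference ``forks over $C$ $\Rightarrow$ algebraic over $Cd$'' is special to $U$-rank $1$ types, and regular types in this context need not have $U$-rank $1$ (in the paper's Example 4.3 the relevant regular type has $U$-rank $\omega$). For a concrete failure, take two sorts carrying groups elementarily equivalent to $\mathbb{Z}_{p^\infty}^{(\omega)}$ linked by a definable surjective homomorphism with infinite kernel: the two generics are nonorthogonal regular types, yet no realisation of one is algebraic over a realisation of the other. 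Dimension does transfer between such types inside a model $M$, but only because $M$, being a model, contains preimages under the definable surjection---not via $\mathrm{acl}$. This is precisely why the paper routes the argument through prime submodels rather than through pointwise transfer of realisations; and your remaining worry, locating the finite witnessing set $C$ inside $M$, is also a real one (a priori $M$ might omit the type of such a witness), and it too is dissolved by the prime-submodel approach.
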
 
\begin{proof} 
Note that there is a unique countable $\omega$-saturated model of $T$ which we will call $M_{\omega}$ and which we can assume to contain $M_{0}$. 
Now suppose $M$ is such that such that for each $i$, $dim((p_{i})_{a_{i}},M)$ is infinite. Let $b$ be a finite tuple from $M$ and $r(y,b)$ a complete type over $b$. 
Then there are countably infinite $J_{i}'\subset J_{i}$ for $i\in I$ such that $b$ is contained in an elementary 
submodel $M'$ of $M$ prime over the $a_{i}$'s together with the $J_{i}'$. So $M'$ is isomorphic to $M_{\omega}$ hence $\omega$-saturated too. 
So $r(x,b)$ is realized in $M'$ so in $M$. Hence $M$ is $\omega$-saturated. 
\end{proof}

\begin{Corollary}\label{1.6}
Any elementary extension of an $\omega$-saturated model of $T$ is also $\omega$-saturated. 
\end{Corollary}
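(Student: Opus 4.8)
The plan is to reduce $\omega$-saturation to the dimension criterion of Lemma 1.5 and then to observe that the relevant dimensions can only grow when one passes to an elementary extension. So let $M$ be an $\omega$-saturated model of $T$ and $N \succeq M$; I want to check the right-hand side of the equivalence in Lemma 1.5 for $N$.

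First I would arrange the hypotheses of Fact 1.3 (and hence of Lemma 1.5) for \emph{both} models. Enlarging $\bar M$ if necessary we may assume $N \preceq \bar M$, which changes nothing about $\omega$-saturation since that property depends only on $N$ and the types it realizes. Next, since $M$ is $\omega$-saturated it contains an elementary copy $M_{0}'$ of the prime model, and so, exactly as in Remark 1.4, elements $a_{i}'$ for $i\in I$ with $tp((a_{i}':i\in I)) = tp((a_{i}:i\in I))$; replacing each $(p_{i})_{a_{i}}$ by its copy over $a_{i}'$ we may assume that $M$ contains all the needed parameters. Because $M \preceq N$, these same parameters lie in $N$, so Lemma 1.5 applies verbatim to each of $M$ and $N$.

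Now for the dimension argument. Applying Lemma 1.5 to $M$, the hypothesis that $M$ is $\omega$-saturated gives that $dim((p_{i})_{a_{i}},M)$ is infinite for every $i\in I$; fix for each $i$ a maximal independent set $J_{i}$ of realizations of $(p_{i})_{a_{i}}$ in $M$, so that $|J_{i}|$ is infinite. Since forking independence is computed in $\bar M$ and is insensitive to which intermediate model one works in, the set $J_{i} \subseteq M \subseteq N$ remains an independent set of realizations of $(p_{i})_{a_{i}}$ inside $N$; extending it to a maximal such set in $N$ shows that $dim((p_{i})_{a_{i}},N) \geq |J_{i}|$ is infinite. As this holds for every $i\in I$, the converse direction of Lemma 1.5, applied this time to $N$, yields that $N$ is $\omega$-saturated.

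The genuine content here is just that these dimensions cannot drop in an elementary extension, and this is immediate once independence is recognized as an absolute notion. Consequently the only step requiring real care is the bookkeeping in the second paragraph, namely ensuring that (copies of) the fixed parameters $a_{i}$ lie simultaneously in $M$ and $N$ so that Lemma 1.5 is legitimately available for each model; I expect no further obstacle, since once both $M$ and $N$ meet the hypotheses of Fact 1.3 the equivalence of Lemma 1.5 does all the work.
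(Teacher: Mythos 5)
Your proof is correct and is essentially the argument the paper intends: Corollary~\ref{1.6} is stated as an immediate consequence of Lemma 1.5, the point being precisely that each dimension $dim((p_{i})_{a_{i}},M)$ can only grow when passing to an elementary extension $N\succeq M$, since an independent set of realizations in $M$ stays independent in $N$. Your bookkeeping via Remark 1.4, arranging copies of the $a_{i}$ inside $M$ (hence inside $N$) so that Lemma 1.5 applies to both models, is exactly the adjustment the paper has in mind.
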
 

Now we can in fact choose $a_{i}$ to be the canonical base of $p_{i}$ (as an element of $M_{0}^{eq}$), 
and it is well-known that then $a_{i}$ is in the definable closure of $J$ whenever $J$ is an infinite Morley sequence in $(p_{i})_{a_{i}}$. 
We conclude:

\begin{Proposition}  
Any $\omega$-saturated  model $M$ of $T$ is prime over a union of indiscernible sets.
\end{Proposition}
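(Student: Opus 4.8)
The plan is to read off the conclusion almost directly from Fact 1.3 and Lemma 1.5 once the parameters $a_i$ are chosen well. First I would take $a_i$ to be the canonical base $Cb(p_i)$, regarded as a tuple of $M_0^{eq}$, exactly as in the paragraph preceding the statement. The point of this choice is twofold. On the one hand, a stationary type is stationary over its canonical base, so $(p_i)_{a_i} = p_i\upharpoonright a_i$ is stationary; hence a maximal independent set $J_i$ of realizations of $(p_i)_{a_i}$ is precisely a Morley sequence in a stationary type, and in the stable (here $\omega$-stable) setting such a set is totally indiscernible. Thus each $J_i$ is automatically an indiscernible set, which is the type of object we need in the conclusion. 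On the other hand, this is the choice for which the quoted definability fact applies.

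Next I would assemble the two facts about $M$. By the Remark following Fact 1.3 we may assume that $M$ contains each $a_i$ (it contains an elementary copy $M_0'$ of the prime model, and the $a_i$ lie in $dcl^{eq}(M_0)$, so a corresponding copy lies in $M^{eq}$); then Fact 1.3 gives that $M$ is prime (and minimal) over $\bigcup_i a_i \cup \bigcup_{i\in I} J_i$. Since $M$ is $\omega$-saturated, Lemma 1.5 tells us that each $J_i$ is infinite. This is exactly the hypothesis under which the cited fact yields $a_i = Cb(p_i) \in dcl(J_i)$.

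The final step is to discard the parameters. Since $a_i \in dcl(J_i)$ for every $i$, we have $\bigcup_i a_i \subseteq dcl\!\left(\bigcup_{i\in I} J_i\right)$, so the sets $\bigcup_i a_i \cup \bigcup_i J_i$ and $\bigcup_i J_i$ have the same definable closure in $M^{eq}$. As primeness (and minimality) over a set depends only on its definable closure — any model containing one set contains the other, and an elementary embedding fixing one pointwise fixes the other pointwise — being prime over $\bigcup_i a_i \cup \bigcup_i J_i$ is the same as being prime over $\bigcup_{i\in I} J_i$. Hence $M$ is prime over $\bigcup_{i\in I} J_i$, a union of indiscernible sets, as required.

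The only genuinely nontrivial input beyond quoting Fact 1.3 and Lemma 1.5 is the definability fact $a_i \in dcl(J_i)$, namely that the canonical base of a stationary type is recovered from an infinite Morley sequence in it; in the $\omega$-stable case this is standard, since the defining scheme of $p_i$ is computed from averages over the infinite indiscernible set $J_i$ and the canonical base is the canonical parameter of that scheme, hence fixed by every automorphism fixing $J_i$ setwise. I expect the main thing to check carefully is that $J_i$ really is an indiscernible \emph{set} and not merely a sequence, which is where stationarity of $(p_i)_{a_i}$ (guaranteed by the canonical-base choice) together with stability is essential.
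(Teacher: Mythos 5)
Your proof is correct and follows essentially the same route as the paper's: choose $a_i$ to be the canonical base of $p_i$, use Fact 1.3 and Lemma 1.5 to get primeness over $\bigcup_i a_i \cup \bigcup_i J_i$ with each $J_i$ infinite, and then absorb the $a_i$ via $a_i \in dcl(J_i)$. The extra details you supply (stationarity of $(p_i)_{a_i}$ making the $J_i$ genuine indiscernible \emph{sets}, and the observation that primeness only depends on the definable closure of the base set) are exactly the points the paper leaves implicit.
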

\begin{proof} 
We may assume $M$ contains $M_{0}$ so $a_{i}$ for each $i\in I$. Let $J_{i}$ be a maximal Morley sequence in $M$ in $(p_{i})_{a_{i}}$. 
Then $J_{i}$ is infinite, whereby  $a_{i}\in dcl(J_{i})$. So by Fact 1.3, $M$ is already prime over the union of the $J_{i}$. 
\end{proof}

\begin{Remark} 
Of course when $T$ is finite-dimensional $I$ is finite. 
At the current level of generality, Theorem 1 of \cite{Baldwin-Shelah} seems only valid for $\omega$-saturated models of $T$. 
\end{Remark}

Finally note by Corollary \ref{1.6} that:

\begin{Remark} 
Let $M$ be an $\omega$-saturated model of $T$, and $B$ any set. 
Then the prime model over $M\cup B$ coincides with the $a$-prime model (prime model in the category of $\omega$-saturated models) over $M\cup B$. 
\end{Remark}

Both authors would like to thank John Baldwin for some useful correspondences. 
Also Baldwin acknowledged that Theorem 2 of \cite{Baldwin-Shelah} may need some fine-tuning to be correct, 
and quite possibly this will be done independently by Baldwin and Shelah. 
The second author would like to thank Artyom Chernikov for pointing out the connection with Mariou's work \cite{Mariou}.

\section{Almost indiscernible theories}

We work with a countable language $L$ and complete $L$-theory $T$.

\begin{Definition} 
$T$ is said to be almost indiscernible if there is a  saturated model of $T$ of cardinality $\aleph_{1}$ 
which is in the algebraic closure of an indiscernible {\em set} of finite tuples $I$ (so $I$ is forced to have cardinality $\aleph_{1}$ too). 
\end{Definition}

\vspace{2mm}
\noindent
{\bf Assumption.}  $T$ is almost indiscernible. 

\vspace{2mm}
\noindent
So we let $M$ denote a saturated model of $T$ of cardinality $\aleph_{1}$ which is in the algebraic closure of an 
indiscernible set (which we write as a sequence)  $I = (e_{\alpha}:\alpha<\aleph_{1})$ of cardinality $\aleph_{1}$.

\vspace{2mm}
\noindent
Let $\bar\kappa$ be a cardinal much bigger than $\aleph_{1}$. Let $\bar M$ be a $\bar\kappa$-saturated elementary extension of $M$. 
Let ${\bar I} = (e_{\alpha}: \alpha <\bar\kappa)$ be an indiscernible set in $\bar M$ extending $I$. 
For each infinite $\lambda \leq\bar \kappa$, let $I_{\lambda} = (e_{\alpha}:\alpha<\lambda)$ and let $M_{\lambda} = acl(I_{\lambda})$ inside ${\bar M}$. 
So $M_{\aleph_{1}} = M$ is an elementary substructure of $\bar M$ by definition of $\bar M$ but on the face of it the other $M_{\lambda}$'s are just subsets of $\bar M$. 
Note that $M_{\lambda}$ has cardinality $\lambda$. We then easily obtain:

\begin{Lemma}\label{saturation}
\ \begin{itemize}
 \item[(i)] The $M_{\lambda}$ form  an elementary chain.
 \item[(ii)] $M_{\omega}$ is $\omega$-saturated.
\end{itemize}
\end{Lemma}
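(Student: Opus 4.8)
The whole lemma rests on one point that I would isolate first as a claim: if $\bar e$ is a finite subtuple of $\bar I$ and $\bar a\in\mathrm{acl}(\bar e)$, then the indiscernible set $\bar I\setminus\bar e$ is still indiscernible over $\bar e\bar a$. Granting this, I prove (i) by checking $M_\lambda\preceq\bar M$ for every infinite $\lambda\le\bar\kappa$ (the elementary chain is then immediate, since $M_\lambda\subseteq M_\mu$ for $\lambda\le\mu$ and both are elementary in $\bar M$); the case $\lambda=\aleph_1$ is the standing hypothesis $M\preceq\bar M$. For the Tarski--Vaught test take $\bar a\in M_\lambda$, say $\bar a\in\mathrm{acl}(\bar e)$ with $\bar e\subseteq I_\lambda$ finite, and suppose $\bar M\models\exists x\,\varphi(x,\bar a)$. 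Using $M=M_{\aleph_1}\preceq\bar M$ one may reduce to the case where the witness $b$ lies in $\mathrm{acl}(\bar e\bar g)$ for a finite $\bar g\subseteq\bar I$: solve the formula inside $M=\mathrm{acl}(I)$ after transporting $\bar e$ into $I_{\aleph_1}$ by an automorphism that preserves $I_\lambda$ setwise, then transport the solution back. After absorbing the $I_\lambda$-part of $\bar g$ into $\bar e$ I take $\bar g$ disjoint from $I_\lambda$. Since $I_\lambda\setminus\bar e$ is infinite, choose $\bar g'\subseteq I_\lambda\setminus\bar e$ of the same length; by the claim $\bar g$ and $\bar g'$ have the same type over $\bar e\bar a$, so some $\sigma\in\mathrm{Aut}(\bar M)$ fixes $\bar e\bar a$ pointwise and sends $\bar g\mapsto\bar g'$. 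Then $\sigma(b)\in\mathrm{acl}(\bar e\bar g')\subseteq M_\lambda$ and $\bar M\models\varphi(\sigma(b),\bar a)$, as required.

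For (ii), clause (i) gives $M_\omega\preceq M_{\aleph_1}=M$, which is saturated and so $\omega$-saturated. To see that $M_\omega$ is itself $\omega$-saturated, fix a finite $A\subseteq M_\omega$ and $p\in S(A)$, and pick finite $\bar e\subseteq I_\omega$ with $A\subseteq\mathrm{acl}(\bar e)$. Realize $p$ by some $c\in M=\mathrm{acl}(I)$ and write $c\in\mathrm{acl}(\bar e\bar g)$ with $\bar g\subseteq I\setminus\bar e$ finite. The claim (now applied with the finite set $A$ in place of $\bar a$, which is legitimate because $A$ still has only finitely many conjugates over $\bar e$) shows $\bar I\setminus\bar e$ is indiscernible over $\bar e\cup A$, so I can find $\bar g'\subseteq I_\omega\setminus\bar e$ with $\mathrm{tp}(\bar g'/\bar eA)=\mathrm{tp}(\bar g/\bar eA)$ and an automorphism $\sigma$ fixing $A$ pointwise with $\sigma(\bar g)=\bar g'$. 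Then $\sigma(c)\in\mathrm{acl}(\bar e\bar g')\subseteq M_\omega$ realizes $\sigma(p)=p$. Hence every type over a finite subset of $M_\omega$ is realized in $M_\omega$.

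The real work, and the step I expect to be the main obstacle, is the claim that indiscernibility of $\bar I\setminus\bar e$ survives passage from $\bar e$ to $\bar e\bar a$ when $\bar a\in\mathrm{acl}(\bar e)$: indiscernibility over $\bar e$ only furnishes automorphisms fixing $\bar e$ pointwise, and such an automorphism may carry $\bar a$ to one of its finitely many $\bar e$-conjugates instead of fixing it. I would resolve this group-theoretically. Fix a countable infinite $J_0\subseteq\bar I\setminus\bar e$ and let $G=\mathrm{Aut}(\bar M/\bar e)$. Because $\bar I$ is an indiscernible \emph{set}, every permutation of $J_0$ is elementary over $\bar e$ and hence extends to a member of $G$, so the restriction $G\to\mathrm{Sym}(J_0)$ is surjective. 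The stabilizer $H\le G$ of $\bar a$ has index equal to the number of $\bar e$-conjugates of $\bar a$, which is finite since $\bar a\in\mathrm{acl}(\bar e)$; therefore the image of $H$ in $\mathrm{Sym}(J_0)$ has finite index. As the symmetric group on an infinite set has no proper subgroup of finite index, this image is all of $\mathrm{Sym}(J_0)$, i.e.\ every permutation of $J_0$ is realized by an automorphism fixing $\bar e\bar a$. Letting $J_0$ range over countable subsets containing any two prescribed finite tuples shows that any two finite distinct-entry tuples of $\bar I\setminus\bar e$ are conjugate over $\bar e\bar a$, which is exactly the desired indiscernibility. Everything else is routine bookkeeping with indiscernibility and the strong homogeneity of $\bar M$.
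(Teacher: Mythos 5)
Your proof is correct in substance and follows the same basic route as the paper's: find a realization in the saturated model $M_{\aleph_1}$, locate it in $\mathrm{acl}(\bar e\,\bar g)$ for finitely many extra elements $\bar g$ of the indiscernible set, and transport $\bar g$ into $I_\omega$ (resp.\ $I_\lambda$) by an automorphism. The difference is one of completeness rather than of method. The paper's proof of (ii) is three lines: it quotes the type equality $tp(e_1,\ldots,e_n,e_{\alpha_1},\ldots,e_{\alpha_k})=tp(e_1,\ldots,e_n,e_{n+1},\ldots,e_{n+k})$ and concludes at once, and it leaves (i) entirely to the reader. This glosses over precisely the point you isolate: the automorphism furnished by that type equality fixes $\bar e$ pointwise but only permutes $\mathrm{acl}(\bar e)$, so it need not fix the finite parameter set $A\subseteq\mathrm{acl}(\bar e)$ of the type being realized. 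Your Claim --- that $\bar I\setminus\bar e$ remains indiscernible over $\bar e\bar a$ when $\bar a\in\mathrm{acl}(\bar e)$ --- is exactly what makes the transport legitimate, and your proof of it (via the fact that an infinite symmetric group has no proper subgroup of finite index) is a correct and genuinely nontrivial addition that the paper treats as routine and never states. Your Tarski--Vaught verification of (i) is likewise fine.

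One repair is needed in the group-theoretic step: there is no well-defined restriction homomorphism $G\to\mathrm{Sym}(J_0)$, because an automorphism fixing $\bar e$ need not carry $J_0$ into itself. Replace $G$ by the setwise stabilizer $G_0$ of $J_0$ in $G$. Surjectivity of $G_0\to\mathrm{Sym}(J_0)$ holds by your own argument: a permutation $\pi$ of $J_0$, extended by the identity on $\bar e$, is elementary by set-indiscernibility, and any automorphism of $\bar M$ extending it lies in $G_0$, since its restriction to $J_0$ is $\pi$. By orbit--stabilizer applied inside $G_0$, the subgroup $H\cap G_0$ fixing $\bar a$ pointwise has finite index in $G_0$ (the $G_0$-orbit of $\bar a$ is contained in the finite set of $\bar e$-conjugates of $\bar a$), so its image in $\mathrm{Sym}(J_0)$ has finite index and is therefore all of $\mathrm{Sym}(J_0)$. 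With this change the rest of your argument goes through verbatim.
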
 
\begin{proof} 
(i) is left to the reader.
\newline
(ii) Let $\Sigma(x)$ be a partial type over a finite subset $A$ of $M_{\omega}$, consistent with ${\bar M}$. 
Then $A$ is contained in the algebraic  closure of $e_{1},\ldots,e_{n}$ say, and $\Sigma(x)$ is realized in $M_{\aleph_{1}}$ 
by some $d$ in the algebraic closure of $e_{1},\ldots,e_{n}$ together with some other $e_{\alpha_{1}},\ldots,e_{\alpha_{k}}$ with $\alpha_{i} < \aleph_{1}$. 
Then as $tp(e_{1},\ldots,e_{n},e_{\alpha_{1}},\ldots,e_{\alpha_{k}}) = tp(e_{1},\ldots,e_{n},e_{n+1},\ldots,e_{n+k})$ we can find such a realization in $M_{\omega}$. 
\end{proof}

\begin{Remark} 
In fact one can also show directly at this stage that each $M_{\lambda}$ is $\lambda$-saturated, 
although it will also follow easily from $\omega$-stability, proved next. 
\end{Remark}

\begin{Proposition} 
$T$ is $\omega$-stable.
\end{Proposition}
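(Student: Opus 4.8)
The plan is to establish $\omega$-stability directly, by showing $|S_1(B)|\le\aleph_0$ for every countable parameter set $B$ (the computation for $n$-types being identical). First I would reduce to a single set. Since the saturated model $M=M_{\aleph_1}$ is universal and homogeneous, every countable $B$ is, after an automorphism, contained in $M=acl(I_{\aleph_1})$; by the finite character of algebraic closure $B\subseteq acl(\bar I_0)$ for some countable $\bar I_0\subseteq I_{\aleph_1}$, and since any bijection between two countable subsets of the indiscernible set $\bar I$ is elementary, homogeneity gives an automorphism carrying $\bar I_0$ onto $I_\omega$ and hence $B$ into $M_\omega=acl(I_\omega)$. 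As restriction $S_1(M_\omega)\to S_1(B')$ is onto for $B'\subseteq M_\omega$, it suffices to prove $|S_1(M_\omega)|\le\aleph_0$.

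I would then realize an arbitrary $p\in S_1(M_\omega)$ by an element $d\in M$, which exists because $M$ is $\aleph_1$-saturated and $M_\omega$ is countable. Writing $d\in acl(\bar e)$ for a finite $\bar e\subseteq I_{\aleph_1}$ and splitting $\bar e$ into a part $\bar a\subseteq I_\omega$ and a fresh part $\bar b$ disjoint from $I_\omega$, I attach to $p$ the finite data: the tuple $\bar a$, an algebraic formula $\chi(x,\bar a,\bar b)$ isolating $tp(d/\bar a\bar b)$, and the position of $d$ among the finitely many roots. There are only countably many such data, so everything comes down to bounding, for fixed data, the number of types $p$ that can arise --- equivalently to controlling the type of the fresh part $\bar b$ over $M_\omega$.

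The crux is the passage from the indiscernible \emph{set} $I_\omega$ to its algebraic closure $M_\omega$. Over $I_\omega$ itself the count is immediate: total indiscernibility makes $tp(\bar b/I_\omega)$ depend only on the length of $\bar b$, so the data above already determines $tp(d/I_\omega)$ and $|S_1(I_\omega)|\le\aleph_0$. The obstacle is that $M_\omega=acl(I_\omega)$ is strictly larger and in general $acl\neq dcl$: a fresh $\bar b$ may ``select'' among the conjugate-over-$I_\omega$ algebraic elements of $M_\omega$, so a priori $tp(\bar b/M_\omega)$ is not pinned down by $tp(\bar b/I_\omega)$, and the residual Galois action of $Aut(M_\omega/I_\omega)$ on types threatens to produce uncountably many of them. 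I expect this to be the main difficulty, and it is exactly where the hypothesis that $I$ is an indiscernible \emph{set} (not merely a sequence) is indispensable. The idea is that the full symmetric group of the set acts by automorphisms over any finite $\bar a\subseteq I_\omega$ and merely permutes the finitely many roots of each algebraic formula over $\bar a$; hence the fresh indiscernibles realize only finitely many types over each finite algebraic extension of $\bar a$, and the symmetry of the set should force these finite choices to cohere, leaving only countably many possibilities for $tp(\bar b/M_\omega)$.

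Putting the countably many templates together with the countably many types of the fresh part bounds $|S_1(M_\omega)|$ by $\aleph_0$, which with the first paragraph gives $\omega$-stability. A cleaner alternative organisation would first prove $T$ stable and then upgrade: were some $\varphi(\bar x,\bar y)$ to have the order property, its witnessing parameters would lie in $acl$ of finitely many members of $\bar I$, and after extracting a uniform template the strict order would be recovered from a definable, necessarily symmetric, relation between position-tuples in the unordered set $\bar I$ --- impossible. This is also the point that distinguishes the present situation from Mariou's, where an indiscernible \emph{sequence} yields only $NIP$.
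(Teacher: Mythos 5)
Your skeleton is essentially the paper's: realize each type over $M_\omega$ by some $d\in M$ algebraic over $M_\omega$ together with a fresh finite tuple $\bar b$ from $\bar I\setminus I_\omega$, attach countable data (a tuple $\bar a$ from $I_\omega$ and an isolating algebraic formula), and reduce the whole count to the number of types that fresh tuples can realize over $M_\omega=acl(I_\omega)$. Your preliminary reduction to $S_1(M_\omega)$ via universality and strong homogeneity of $M$ is correct (the paper instead quotes the $\omega$-saturation of $M_\omega$, Lemma 2.2(ii)), and the imprecision in ``the position of $d$ among the roots'' is harmless: for fixed $\chi$ and fixed $tp(\bar b/M_\omega)$ the roots of $\chi$ realize only finitely many types over $M_\omega$, which is all the counting needs.

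The genuine gap is at the step you yourself flag as the crux, and the mechanism you propose there would not close it. From ``the fresh indiscernibles realize only finitely many types over each finite algebraic extension of $\bar a$'' one cannot conclude that there are only countably many possibilities for $tp(\bar b/M_\omega)$: a type over $M_\omega$ is exactly a coherent system of types over the finite levels $\bar a\bar m$, i.e., a point of an inverse limit of finite sets, and such a limit can have cardinality $2^{\aleph_0}$ when the finite bounds grow --- and your orbit bounds do grow with $\bar m$ (compare a tree with $2^n$ nodes at level $n$: finite at every level, continuum many branches). What is actually needed is the much stronger statement that fresh tuples of a given length realize exactly \emph{one} type over $M_\omega$ --- equivalently, that an infinite indiscernible \emph{set} over a set $A$ remains an indiscernible set over $acl(A)$ --- which is precisely what the paper asserts, in one unproved line (``the type over $M_\omega$ of such a finite tuple $\bar e$ is determined by cardinality of $\bar e$''). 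Your symmetric-group idea can be upgraded to a proof of this lemma, but it requires an actual argument, for instance: if two disjoint fresh $k$-tuples $\bar e,\bar e'$ differed over $\bar a\bar m$ (where $\bar m$ lists the $D$ conjugates over $\bar a$ of some element of $acl(\bar a)$), the automorphism over $I_\omega$ swapping $\bar e$ and $\bar e'$ coordinatewise and fixing the rest of $\bar I$ pointwise induces a permutation of $\bar m$ that moves $tp(\bar e/\bar a\bar m)$ while fixing $tp(\bar f/\bar a\bar m)$ for every fresh $\bar f$ disjoint from $\bar e\bar e'$; it follows first that every such $\bar f$ differs from both $\bar e$ and $\bar e'$ over $\bar a\bar m$, and then, applying the same step to the pair $(\bar e,\bar f_1)$, that any family of pairwise disjoint fresh tuples realizes pairwise \emph{distinct} types over $\bar a\bar m$, contradicting the finiteness you already established once the family has more than $D!$ members. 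With this lemma inserted, your count goes through and coincides with the paper's; without it the proof is incomplete, and the same objection applies to your alternative ``no order property'' sketch, since the witnessing parameters live in $acl(\bar I)$ rather than in the unordered set $\bar I$ itself.
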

\begin{proof} By Lemma \ref{saturation}(ii), it suffices to show that there only countably many complete $1$-types over $M_{\omega}$. 
Now any such type is of the form $tp(d/M_{\omega})$ for some $d\in M_{\omega_{1}}$ and $d\in acl(M_{\omega}\cup I')$ 
where $I'= I_{\omega_{1}}\setminus I_{\omega}$. So $tp(d/M_{\omega}\cup I')$ is isolated by some formula $\phi(x,{\bar 
e})$ where $\phi(x,{\bar y})$ has parameters from $M_{\omega}$ and $\bar e$ is a finite tuple from $I'$. 
As the type over $M_{\omega}$ of such a finite tuple $\bar e$ is determined by cardinality of $\bar e$ 
we see really that $tp(d/M_{\omega})$ is determined by the formula $\phi(x,{\bar y})$  (which includes the length of $\bar y$). As there are countably many 
possibilities there are countably many such types. 
\end{proof}

Concerning saturation of the $M_{\lambda}'$: let $q(x)$ be a complete type over a subset $A$ of $M_{\lambda}$ of cardinality $<\lambda$. 
We may assume that $A$ contains $M_{\omega}$.

Let now $p$ denote the so-called average type of $I$ over ${\bar M}$. Namely  $p(x)\in S({\bar M})$ and for $\phi(x)$ over ${\bar M}$, $\phi(x)\in p$ 
if $\phi(e_{i})$ holds for all but finitely many $i<\kappa$.  By $\omega$-stability $p$ is definable over $\{e_{i}:i<n\}$ for some finite $n$, in particular 
definable over $M_{\omega}$ and moreover $p|M_{\omega} = tp(e_{\omega}/M_{\omega})$ and ${\bar I}$ is a Morley sequence in $p|M_{\omega}$. 

\begin{Lemma} 
Any complete type over $M_{\omega}$ is nonorthogonal to $p$, hence nonweakly orthogonal to $p$ as $M_{\omega}$ is an $a$-model.
\end{Lemma}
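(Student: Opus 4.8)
The plan is to take an arbitrary nonalgebraic complete type $q(x) = tp(d/M_{\omega}) \in S(M_{\omega})$ and exhibit a dependence between a realization of a nonforking extension of $q$ and a realization of a nonforking extension of $p$, which by definition witnesses $q \not\perp p$. Since $\bar M$ is saturated we may assume $d \in \bar M$, and as $\bar M = acl(\bar I)$ we have $d \in acl(e_{\alpha_{1}},\ldots,e_{\alpha_{m}})$ for finitely many indices $\alpha_{1},\ldots,\alpha_{m} < \bar\kappa$. Separating those indices lying in $[0,\omega)$, which contribute elements of $I_{\omega}\subseteq M_{\omega}$, from the rest, I would write $d \in acl(M_{\omega}\cup \bar e')$ where $\bar e' = (e_{\beta_{1}},\ldots,e_{\beta_{k}})$ is a finite tuple of elements of $\bar I\setminus I_{\omega}$. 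By the remarks preceding the lemma each $e_{\beta_{j}}$ realizes $p|M_{\omega} = tp(e_{\omega}/M_{\omega})$, and, $\bar I$ being a Morley sequence in $p|M_{\omega}$, the tuple $\bar e'$ is independent over $M_{\omega}$.

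Next I would isolate a single dependence. Because $q$ is nonalgebraic and $M_{\omega} = acl(M_{\omega})$, the element $d$ lies outside $M_{\omega}$, so $d \in acl(M_{\omega}\cup \bar e')$ forces $d$ to be dependent on $\bar e'$ over $M_{\omega}$. Let $j$ be least such that $d$ depends on $\{e_{\beta_{1}},\ldots,e_{\beta_{j}}\}$ over $M_{\omega}$, and put $C = M_{\omega}\cup\{e_{\beta_{1}},\ldots,e_{\beta_{j-1}}\}$. Minimality gives that $tp(d/C)$ is a nonforking extension of $q$, while independence of $\bar e'$ over $M_{\omega}$ gives that $e_{\beta_{j}}$ realizes the nonforking extension $p|C$ of $p$ (recall $p$ is definable over $M_{\omega}$); and by transitivity of forking $d$ and $e_{\beta_{j}}$ are dependent over $C$. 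Thus $tp(d/C)$ and $p|C$, nonforking extensions of $q$ and of $p$ respectively, are not weakly orthogonal, which is exactly the statement that $q \not\perp p$.

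Finally, the clause \emph{hence nonweakly orthogonal to $p$} concerns $M_{\omega}$ itself rather than $C$, and here I would invoke the standard fact (recorded in \cite{Pillay-GST}) that over an $a$-model nonorthogonal stationary types are already nonweakly orthogonal; since $M_{\omega}$ is $\omega$-saturated, hence an $a$-model for the $\omega$-stable $T$, this applies. I expect the only delicate point to be the middle step: confirming that passing to the least $j$ yields simultaneously a nonforking extension of $q$ over $C$ and a realization of the nonforking extension of $p$ over $C$ that depends on $d$, i.e.\ the bookkeeping of the chain rule for forking. The finite-support decomposition coming from $\bar M = acl(\bar I)$ and the $a$-model input are comparatively routine.
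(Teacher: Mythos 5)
Your overall strategy is exactly the paper's: realize $q$ by an element $d$ lying in the algebraic closure of $M_{\omega}$ together with finitely many elements of the indiscernible set, note that those elements form an independent set of realizations of $p|M_{\omega}$, and extract nonorthogonality from the resulting dependence. Your ``least $j$'' bookkeeping is correct and is precisely the argument the paper compresses into the words ``it follows that $q$ is nonorthogonal to $p$''; likewise your appeal to the fact that nonorthogonality over an $a$-model implies non-weak-orthogonality (with $M_{\omega}$ an $a$-model since $T$ is $\omega$-stable and $M_{\omega}$ is $\omega$-saturated) is what the ``hence'' clause intends.

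There is, however, a genuine flaw in your opening step: the assertion that $\bar M = acl(\bar I)$. This is not part of the setup and need not be true. The paper takes $\bar M$ to be an arbitrary $\bar\kappa$-saturated elementary extension of $M$ and then chooses $\bar I$ inside it; the set $acl(\bar I) = M_{\bar\kappa}$ is an elementary substructure of $\bar M$, but possibly a proper one (a saturated model can have proper saturated elementary substructures, even of the same cardinality). Consequently the realization $d$ of $q$ that saturation of $\bar M$ hands you has no reason to lie in $acl(\bar I)$, and your finite-support decomposition $d\in acl(e_{\alpha_{1}},\ldots,e_{\alpha_{m}})$ does not follow. The repair is exactly the paper's first line: since $M_{\omega}$ is countable and $M = M_{\aleph_{1}}$ is $\aleph_{1}$-saturated \emph{by the standing hypothesis}, $q$ is already realized by some $d\in M = acl(I)$, and hence $d\in acl(M_{\omega}\cup\bar e)$ for some finite $\bar e$ from $I\setminus I_{\omega}$. (One could instead try to justify taking $\bar M = acl(\bar I)$ via the $\lambda$-saturation of each $M_{\lambda}$, as in Remark 2.3, but that requires a separate argument, whereas the $\aleph_{1}$-saturation of $M$ is given.) With this one change, the remainder of your proof goes through verbatim.
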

\begin{proof} 
Let $q(y)\in S(M_{\omega})$. Then as $M = M_{\aleph_{1}}$ is $\aleph_{1}$-saturated $q$ is realized by some $d\in acl(M_{\omega},{\bar  e})$ 
for some finite ${\bar e}$ from $I$. As ${\bar e}$ is an independent set of realizations of $p|M_{\omega}$ it follows that $q$ is nonorthogonal to $p$. 
\end{proof}

\begin{Proposition} 
$T$ is finite-dimensional.
\end{Proposition}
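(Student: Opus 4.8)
The plan is to show that $T$ has only finitely many regular types up to nonorthogonality, which by the Definition in the introduction is exactly finite-dimensionality. I have already established that $T$ is $\omega$-stable, so in particular $T$ is superstable and the machinery of regular types applies. The key input is the previous Lemma, which asserts that every complete type over $M_\omega$ is nonorthogonal (indeed nonweakly orthogonal) to the single fixed type $p$, the average type of the indiscernible set $I$.

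First I would recall that over an $a$-model such as $M_\omega$, every nonalgebraic type is nonorthogonal to some regular type, and in a superstable theory every regular type is nonorthogonal to a regular type over any given $a$-model; thus each regular type of $T$ is, up to nonorthogonality, realized by a regular type over $M_\omega$. So it suffices to bound the number of regular types over $M_\omega$ up to nonorthogonality. By the Lemma, every type over $M_\omega$ — and in particular every regular type over $M_\omega$ — is nonorthogonal to $p$. Since nonorthogonality is an equivalence relation on the set of regular types (a standard fact for superstable theories), this forces all regular types to lie in a single nonorthogonality class, namely that of $p$. Hence there is exactly one regular type up to nonorthogonality, and $T$ is finite-dimensional (indeed one-dimensional in the naive counting).

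The one point that needs genuine care, and which I expect to be the main obstacle, is the transitivity of nonorthogonality among \emph{regular} types. In general nonorthogonality of arbitrary stationary types is not transitive, so the argument really depends on restricting attention to regular types, where transitivity does hold in the superstable setting. I would therefore be careful to phrase the conclusion as: every regular type is nonorthogonal to $p$, and since $p$ itself is nonorthogonal to a regular type (it is realized in a Morley sequence and its nonorthogonality class contains a regular type), all regular types fall into one class. This uses that $p \upharpoonright M_\omega = tp(e_\omega/M_\omega)$ is the type of a member of a Morley sequence whose realizations are independent, so it is nonorthogonal to a regular component.

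An alternative, perhaps cleaner, route avoids an explicit appeal to transitivity by working directly with the decomposition from Fact~1.3 and Proposition~1.7. Since $M_\omega$ is $\omega$-saturated, by Proposition~1.7 it is prime over a union of Morley sequences in the regular types $(p_i)_{a_i}$ over the prime model; each such $p_i$ is a regular type, and by the Lemma each is nonorthogonal to $p$. If two of them, say $p_i$ and $p_j$, were orthogonal, then neither could be nonorthogonal to a common type, contradicting both being nonorthogonal to $p$ once we invoke regularity. Either way the conclusion is that the index set of nonorthogonality classes of regular types is a singleton, so in particular finite, completing the proof that $T$ is finite-dimensional.
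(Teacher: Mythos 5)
There is a genuine gap, and it is exactly at the point you flagged as needing care. Transitivity of nonorthogonality holds among \emph{regular} types, but the average type $p$ of the indiscernible set is not regular in general: it can have weight greater than one. From ``every regular type over $M_{\omega}$ is nonorthogonal to $p$'' together with ``$p$ is nonorthogonal to some regular type $q$'' you cannot conclude that the regular types are pairwise nonorthogonal, because the middle term of the chain $r \not\perp p \not\perp q$ is not regular. Indeed a type of weight $n>1$ is typically nonorthogonal to $n$ pairwise \emph{orthogonal} regular types (think of $p = tp(a,b)$ where $a,b$ realize orthogonal regular types). Your conclusion that there is exactly one nonorthogonality class is simply false for almost indiscernible theories: take the theory of two disjoint infinite sets $P$ and $Q$ covering the universe, with the indiscernible set of pairs $(a_{i},b_{i})$ enumerating $P$ and $Q$; this is almost indiscernible, yet the generic types of $P$ and of $Q$ are orthogonal regular types, giving two classes. (The paper's Example 4.2 is a richer instance of the same phenomenon.) The same error sinks your ``alternative route'': two orthogonal regular types \emph{can} both be nonorthogonal to a common non-regular type, so no contradiction arises. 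That route has an additional problem of circularity, since Fact 1.3 and Proposition 1.7 are stated under the standing assumption that $T$ is nonmultidimensional, which is (a weakening of) what is being proved.

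The correct argument, which is the paper's, replaces the appeal to transitivity by a \emph{weight} bound. Since $M_{\omega}$ is an $a$-model, nonorthogonality to $p$ upgrades to non-weak-orthogonality: each regular type $r$ over $M_{\omega}$ has a realization $a_{r}$ forking with $e_{\omega}$ over $M_{\omega}$. If $r_{1},\ldots,r_{n}$ are pairwise orthogonal regular types over $M_{\omega}$, the corresponding $a_{r_{i}}$ are independent over $M_{\omega}$ and each forks with $e_{\omega}$; hence $n$ is at most the weight of $p$, which is finite by superstability. This bounds the number of nonorthogonality classes of regular types over $M_{\omega}$ by a finite number (not necessarily one), and since every regular type of $T$ is nonorthogonal to one over the $a$-model $M_{\omega}$, finite-dimensionality follows. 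So the missing idea in your proposal is precisely the use of finite weight in place of the (unavailable) transitivity through $p$.
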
 
\begin{proof} 
For any regular type $r$ over $M_{\omega}$, by the previous lemma there is a realization $a_{r}$ of $r$ such that $e_{\omega}$ forks with $a_{r}$ over $M_{\omega}$. 
If $r_{1},\ldots,r_{n}$ are pairwise orthogonal regular types then the $a_{r_{i}}$ are independent over $M_{\omega}$ 
and each forks with $e_{\omega}$ over $M_{\omega}$. So the weight of $p$ gives a bound on $n$. 
Hence there are at only finitely many regular types over $M_{\omega}$ up to nonorthogonality. 
By $\omega$-stability and the fact that $M_{\omega}$ is an $a$-model, this implies that $T$ is finite-dimensional. 
\end{proof}

So we see by Proposition 1.7 and its proof that any $\omega$-saturated model of $T$ is prime over a 
finite union of indiscernible sets each of which comes from a nonorthogonality class of a 
(strongly) regular type of $T$.  This is (suitably adapted) Theorem 1 of \cite{Baldwin-Shelah}.  We now make a few refinements.

\begin{Proposition} 
$M_{\omega + 1} = acl(M_{\omega}, e_{\omega})$ is prime and $a$-prime (and minimal) over $(M_{\omega},c_{1},\ldots,c_{n})$ where 
$tp(c_{i}/M_{\omega})$ is regular, $\{c_{1},\ldots,c_{n}\}$ is $M_{\omega}$-independent, and each regular $q\in S(M_{\omega})$ 
appears up to nonorthogonality among the $tp(c_{i}/M_{\omega})$. 
\end{Proposition}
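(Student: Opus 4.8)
The plan is to realize $M_{\omega+1}$ as an $a$-prime model over $M_\omega\cup\{e_\omega\}$ and then feed it into the standard regular-type decomposition of $a$-prime models over $a$-models. First I would note that $M_{\omega+1}=acl(M_\omega,e_\omega)$ is itself $\omega$-saturated: by Lemma~\ref{saturation}(i) it is an elementary extension of the $\omega$-saturated model $M_\omega$, so Corollary~\ref{1.6} applies, making $M_{\omega+1}$ an $a$-model. I would then identify it as the $a$-prime (equivalently, by the closing Remark, prime) model over $M_\omega\cup\{e_\omega\}$: the $a$-prime model $N_0$ over $M_\omega\cup\{e_\omega\}$ embeds over $M_\omega e_\omega$ into the $\omega$-saturated $M_{\omega+1}$, while any model containing $M_\omega\cup\{e_\omega\}$ contains $acl(M_\omega,e_\omega)=M_{\omega+1}$; hence $N_0=M_{\omega+1}$.

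Next I would invoke the analysis of $a$-prime models over $a$-models in the superstable (here $\omega$-stable) setting, as collected in the miscellaneous facts of \cite{Pillay-GST}, Chapter~1, \S4. Since $T$ is $\omega$-stable, $p|M_\omega=tp(e_\omega/M_\omega)$ has finite weight, so there is a finite $M_\omega$-independent set $\{c_1,\ldots,c_n\}$ of realizations of regular types over $M_\omega$, all lying inside $M_{\omega+1}$, which is domination-equivalent to $e_\omega$ over $M_\omega$. Because $M_{\omega+1}$ is the $a$-prime model over $M_\omega\cup\{e_\omega\}$ and domination-equivalent tuples have the same $a$-prime model, $M_{\omega+1}$ is $a$-prime, and $a$-minimal, over $M_\omega\cup\{c_1,\ldots,c_n\}$ as well.

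It then remains to verify that every nonorthogonality class of regular types over $M_\omega$ is represented among the $tp(c_i/M_\omega)$. For a regular $q\in S(M_\omega)$ with $q\perp p$, domination of $M_{\omega+1}$ by $e_\omega$ over $M_\omega$ forces $dim(q,M_{\omega+1}/M_\omega)=0$, so such a $q$ cannot appear; conversely the domination-equivalence of $e_\omega$ with $(c_1,\ldots,c_n)$ gives that $q\not\perp p$ holds iff $q\not\perp tp(c_i/M_\omega)$ for some $i$. By the earlier Lemma every complete, in particular every regular, type over $M_\omega$ is nonorthogonal to $p$, and by finite-dimensionality of $T$ there are only finitely many such classes, so all of them occur among the $c_i$. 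Finally, since $M_\omega$ is $\omega$-saturated, the Remark identifying prime with $a$-prime over an $\omega$-saturated base upgrades $a$-primeness of $M_{\omega+1}$ over $M_\omega\cup\{c_1,\ldots,c_n\}$ to primeness, and prime models are minimal. The step I expect to be most delicate is ensuring the $c_i$ are literally elements of $M_{\omega+1}$ (not merely domination-equivalent to $e_\omega$ somewhere in $\bar M$) and that the decomposition selects exactly the classes nonorthogonal to $p$; both are controlled by carrying out the decomposition inside the $a$-prime model $M_{\omega+1}$ and exploiting that it is dominated over $M_\omega$ by the adjoined tuple $e_\omega$.
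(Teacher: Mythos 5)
Your proposal is correct and takes essentially the same route as the paper's proof: identify $M_{\omega+1}$ as prime and $a$-prime over $(M_{\omega},e_{\omega})$, extract inside it a finite $M_{\omega}$-independent set $c_{1},\ldots,c_{n}$ of realizations of regular types via the standard facts on $a$-prime models over $a$-models from \cite{Pillay-GST}, and use Lemma 2.5 to see that every regular type over $M_{\omega}$ occurs up to nonorthogonality among the $tp(c_{i}/M_{\omega})$. The one blemish is your closing justification ``prime models are minimal'' (false in general); minimality here instead follows from the $a$-minimality you already established, since by Corollary \ref{1.6} any elementary submodel of $M_{\omega+1}$ containing $M_{\omega}$ is itself an $a$-model.
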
 
\begin{proof} 
Now $M_{\omega + 1}$ is clearly prime over $(M_{\omega},e_{\omega})$ and by Remark 1.10 is also $a$-prime over 
$(M_{\omega},e_{\omega})$. Let $\{c_{1},\ldots,c_{n}\}$ be a maximal independent over $M_{\omega}$ subset of 
$M_{\omega+1}$ such that each $tp(c_{i}/M_{\omega})$ is regular. 
By \cite{Pillay-GST}, $M_{\omega + 1}$ is $a$-prime and so also prime over $(M_{\omega},c_{1},\ldots,c_{n})$. 
It remains to be seen that every regular $q\in S(M_{\omega})$ appears among the $tp(c_{i}/M_{\omega})$ up to nonorthogonality. 
But by Lemma 2.5, $p|M_{\omega}$ dominates $q$, so $q$ is realized in $M_{\omega + 1}$ by some $d$, and then $d$ forks with some $c_{i}$ over $M$. 
\end{proof} 

We now aim for a stronger result which decomposes $p|M_{\omega}$  into  a product of weight one types in a stronger sense.  

\begin{Proposition}\label{2.8} 
There are tuples $d_{1},\ldots,d_{n}$ such that:
\begin{itemize}
 \item[(i)] $tp(d_{i}/M_{\omega})$ has weight one and $c_{i}\in acl(M_{\omega},d_{i})$, for each $i$;
 \item[(ii)] $\{d_{1},\ldots,d_{n}\}$ is $M_{\omega}$-independent; and
 \item[(iii)] $e_{\omega}$ is interalgebraic with $(d_{1},\ldots,d_{n})$ over $M_{\omega}$.
\end{itemize}
 \end{Proposition}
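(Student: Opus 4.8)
The plan is to work inside $N := M_{\omega + 1} = acl(M_{\omega},e_{\omega})$, noting first that $N$ is itself an $a$-model: by Lemma 2.2(i) $M_{\omega}$ is an elementary substructure of $M_{\omega + 1}$, and $M_{\omega}$ is $\omega$-saturated by Lemma 2.2(ii), so $N$ is $\omega$-saturated by Corollary 1.6. Since each $tp(c_{i}/M_{\omega})$ is regular it already has weight one, so conditions (i) and (ii) of the Proposition hold trivially on taking $d_{i} = c_{i}$; the entire content is condition (iii), the interalgebraicity of $e_{\omega}$ with $(d_{1},\ldots,d_{n})$ over $M_{\omega}$. By Proposition 2.7 we only know that $N$ is $a$-prime over $(M_{\omega},c_{1},\ldots,c_{n})$, so that $e_{\omega}$ is merely \emph{dominated} by $(c_{1},\ldots,c_{n})$ over $M_{\omega}$, which is weaker than $e_{\omega}\in acl(M_{\omega},c_{1},\ldots,c_{n})$. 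Thus the $c_{i}$ are in general too small, and the real task is to enlarge each $c_{i}$ to a weight one tuple $d_{i}$ which absorbs the entire contribution of its regular type to $e_{\omega}$.

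Write $r_{i} = tp(c_{i}/M_{\omega})$. The first step is to recall that, over the $a$-model $M_{\omega}$, a type is of weight one and nonorthogonal to the regular type $r_{i}$ if and only if it is domination-equivalent to $r_{i}$; hence the admissible candidates for $d_{i}$ are exactly the tuples in $N$ with $c_{i}\in acl(M_{\omega},d_{i})$ and $tp(d_{i}/M_{\omega})$ domination-equivalent to $r_{i}$. For each nonorthogonality class of regular types I would extract the maximal $r_{i}$-semi-regular part of $e_{\omega}$ inside the $a$-model $N$; by finite-dimensionality (Proposition 2.6) there are only finitely many such classes, all represented among the $r_{i}$, and the part belonging to a given class has finite $r_{i}$-weight equal to the number of $c_{j}$ lying in that class. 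Using the structure of semi-regular types over $a$-models, this part is interalgebraic over $M_{\omega}$ with a product of that many $M_{\omega}$-independent weight one tuples, each domination-equivalent to $r_{i}$; I would take these as the $d_{j}$ in that class, arranging the matching so that $c_{j}\in acl(M_{\omega},d_{j})$. Condition (i) is then immediate, and the $M_{\omega}$-independence of $\{d_{1},\ldots,d_{n}\}$ required for (ii) is inherited from that of $\{c_{1},\ldots,c_{n}\}$, since $d_{j}$ and $c_{j}$ are domination-equivalent over $M_{\omega}$.

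The main obstacle is condition (iii), and more precisely the upgrade from domination to interalgebraicity: I must show $e_{\omega}\in acl(M_{\omega},d_{1},\ldots,d_{n})$, the reverse inclusion $d_{j}\in acl(M_{\omega},e_{\omega})$ being immediate as each $d_{j}\in N$. Domination alone does not deliver this, so the crux is to prove that $tp(e_{\omega}/M_{\omega})$ is not merely nonorthogonal to, but actually \emph{internal} to, the family $\{r_{1},\ldots,r_{n}\}$, so that $e_{\omega}$ lies in the algebraic closure over $M_{\omega}$ of realizations of the $r_{i}$ and hence, by the maximality of the semi-regular parts captured by the $d_{j}$, in $acl(M_{\omega},d_{1},\ldots,d_{n})$. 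Here I expect to exploit that $N$ is an $a$-model generated over $M_{\omega}$ by the \emph{finite} tuple $e_{\omega}$, together with finite-dimensionality, so as to rule out any hidden weight zero (orthogonal to all regular) part of $e_{\omega}$ that domination would otherwise tolerate. This is exactly the step that goes beyond Proposition 2.7, and I would lean on the analysis of semi-regular types over $a$-models in Section 4 of Chapter 1 of \cite{Pillay-GST}.
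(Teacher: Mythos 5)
You have isolated the right difficulty (upgrading domination to interalgebraicity), but the statement you propose as the crux --- that $tp(e_{\omega}/M_{\omega})$ is internal to the family $\{r_{1},\ldots,r_{n}\}$ of regular types, so that $e_{\omega}\in acl(M_{\omega}\cup\{\mbox{realizations of the }r_{i}\})$ --- is false in general, even for saturated free algebras. Take $T = Th((\Z/4\Z)^{(\omega)})$, the theory of the free algebra in the variety of abelian groups of exponent $4$; this theory is uncountably categorical, so its model of size $\aleph_{1}$ is saturated and free, and all standing hypotheses hold. Here there is a single regular type up to nonorthogonality, namely the generic $r$ of the strongly minimal subgroup $2G$; one may take $n=1$ and $c_{1} = 2e_{\omega}$. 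The type $tp(e_{\omega}/M_{\omega})$ has weight one and is domination-equivalent to $r$, but it is not (almost) internal to the regular types: every realization of a regular type over $M_{\omega}$ lies in $M_{\omega} + 2{\bar M}$, and any homomorphism $f$ from ${\bar M}$ to its $2$-torsion $2{\bar M}$ vanishing on $M_{\omega}+2{\bar M}$ gives an automorphism $x\mapsto x+f(x)$ of ${\bar M}$ fixing $M_{\omega}\cup 2{\bar M}$ pointwise and moving $e_{\omega}$ anywhere in the infinite coset $e_{\omega}+2{\bar M}$; hence $e_{\omega}\notin acl(M_{\omega}\cup 2{\bar M})$. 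Consequently your extraction of maximal $r_{i}$-semi-regular parts cannot capture $e_{\omega}$: in this example the maximal semi-regular part of $e_{\omega}$ is interalgebraic with $c_{1}=2e_{\omega}$ itself, and $e_{\omega}\notin acl(M_{\omega},c_{1})$. (Note the Proposition is still true here, trivially, with $d_{1}=e_{\omega}$ --- which already signals that the $d_{i}$ must be built from copies of $e_{\omega}$, not from realizations of the regular types. Relatedly, general theory over $a$-models gives only domination-equivalence with a product of regular types, or a filtration with semi-regular successive quotients; it does not give interalgebraicity with a product of weight one tuples, which is exactly the content being proved.)

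The paper's proof avoids this obstruction by using the almost-indiscernibility hypothesis in an essential way, at precisely the point where your internality claim would sit. In Claim I each $d_{i}$ is chosen to realize $tp(e_{\omega}/M_{\omega},c_{i})$ --- a copy of $e_{\omega}$ over $c_{i}$ --- with the $d_{i}$ made independent over $M_{\omega}$ by forking calculus; thus $(d_{1},\ldots,d_{n})$ is a Morley sequence in $p|M_{\omega}$, and by Lemma 2.2 the set $acl(M_{\omega},d_{1},\ldots,d_{n})$ is a \emph{model}. This is the special structural fact (false in a general $\omega$-stable finite-dimensional theory) that replaces internality: since this model contains $c_{1},\ldots,c_{n}$, it contains a copy of the prime model over $(M_{\omega},c_{1},\ldots,c_{n})$, which by Proposition 2.7 contains a realization of $tp(e_{\omega}/M_{\omega},c_{1},\ldots,c_{n})$, and after an automorphism one gets $e_{\omega}\in acl(M_{\omega},d_{1},\ldots,d_{n})$ (Claim II). The weight-one requirement in (i) is then obtained not from semi-regularity but by a further massaging: choose $f_{i}$ independent from $c_{i}$ over $M_{\omega}$ minimizing the Morley rank of $tp(d_{i}/M_{\omega},f_{i})$, so that $c_{i}$ dominates $d_{i}$ over $(M_{\omega},f_{i})$ and $tp(d_{i}/M_{\omega},f_{i})$ has weight one, and then pull the $f_{i}$ into $M_{\omega}$ using $\omega$-saturation. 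Finally, the direction $d_{i}\in acl(M_{\omega},e_{\omega})$ follows from uniqueness of prime models via Proposition 2.7, much as you observed. Any repair of your outline must replace the internality step by an appeal to Lemma 2.2 (equivalently, to the fact that algebraic closures of Morley sequences in $p|M_{\omega}$ are models); without that input the approach fails on the example above.
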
 

Proposition 2.8 is essentially Lemma 13 of  \cite{Baldwin-Shelah}, although they  have in (iii) only one direction of the interalgebraicity, 
namely that $e_{\omega}$ is algebraic over $M_{\omega},d_{1},\ldots,d_{n}$.  (But the other direction follows automatically as we point out).

So we will give a quick proof of Proposition 2.8  following the same general line of argument as in \cite{Baldwin-Shelah} with a few simplifications.  

\begin{proof}[Proof of Proposition \ref{2.8}] 
\ \\
{\em Claim I.}  There are $d_{1},\ldots,d_{n}$ such that $\{d_{1},\ldots,d_{n}\}$ is $M_{\omega}$-independent, 
and $tp(d_{i},c_{i}/M_{\omega}) = tp(e_{\omega},c_{i}/M_{\omega})$ for each $i$. 
In particular $c_{i}\in acl(M_{\omega},d_{i})$ for each $i$, and $(d_{1},\ldots,d_{n})$ realizes $(p|M_{\omega})^{(n)}$. 
\newline
{\em Proof (of Claim I).}  
Simply choose $d_{i}$ to realize $tp(e_{\omega}/M_{\omega},c_{i})$ such that the $d_{i}$'s are as 
independent as possible over $M_{\omega}$. For example, inductively choose the $d_{i}$ such that $d_{i+1}$ 
is independent from $M_{\omega},d_{1},\ldots,d_{i}$ over $c_{i+1}$. Then the independence of the $c_{i}$ plus forking calculus 
guarantees the independence of the $d_{i}$.

\vspace{2mm}
\noindent
{\em Claim II.} There are $d_{1},\ldots,d_{n}$ as in Claim I, such that $e_{\omega}\in acl(M_{\omega},d_{1},\ldots,d_{n})$.
\newline
{\em Proof (of Claim II).}  Let $M^{n} = acl(M_{\omega},d_{1},\ldots,d_{n})$. 
Then by Lemma 2.2 $M^{n}$ is the prime model over $(M_{\omega},d_{1},\ldots,d_{n})$ so contains a copy of the prime model over $M_{\omega},c_{1},\ldots,c_{n}$. 
So (by Proposition 2.7) we find $e_{\omega}'$ in $M^{n}$ realizing  $tp(e_{\omega}/M_{\omega},c_{1},\ldots,c_{n})$, which suffices. 

\vspace{2mm}
\noindent
Finally we massage the situation in a routine manner to get the full statement of Proposition 2.8. 
For each $i = 1,\ldots,n$ let $f_{i}$ be a tuple such that $c_{i}$ is independent from $c_{i}$ over $M_{\omega}$ and the Morley rank of $tp(d_{i}/M_{\omega},f_{i})$ is minimized. 
Then we know  that $c_{i}$ dominates $d_{i}$ over $M_{\omega}, f_{i}$, whereby $tp(d_{i}/M_{\omega},f_{i})$ has weight one. 
Now choosing the $f_{i}$'s as free as possible, we can ensure that $(f_{1},\ldots,f_{n})$ is independent from 
$(c_{1},\ldots,c_{n})$ over $M_{\omega}$ from which we conclude that $(c_{1},\ldots,c_{n})$ dominates $(d_{1},\ldots,d_{n})$ 
over $(M_{\omega},f_{1},\ldots,f_{n})$.   Let ${\bar f} = (f_{1},\ldots,f_{n})$.  Note that as $(c_{1},\ldots,c_{n})$ 
dominates $e_{\omega}$ over $M_{\omega}$, we have that:
\newline
(*) $e_{\omega}$ is independent from $\bar f$ over $M_{\omega}$. 
\newline
Now choose finite $A\subset M_{\omega}$ such that  $tp(e_{\omega},c_{1},\ldots,c_{n}, d_{1},\ldots,d_{n})/M_{\omega},{\bar f})$  does not fork over $A,{\bar f}$, 
and bearing in mind (*), we may assume that $tp(e_{\omega}/M_{\omega}{\bar f})$ is also definable over $A$. 
Note that we have that $(c_{1},\ldots,c_{n})$ dominates $(d_{1},\ldots,d_{n})$ over $(A,{\bar f})$, and
$e_{\omega}\in acl(d_{1},\ldots,d_{n},A,{\bar f})$.
As $M_{\omega}$ is $\omega$-saturated, we may choose ${\bar f}'$ in $M_{\omega}$ such that $tp({\bar f}/A) = tp({\bar f}'/A)$.  So:
\newline
(a) $tp(a_{\omega},c_{1},\ldots,c_{n},{\bar f}/A) = tp(a_{\omega},c_{1},\ldots,c_{n},{\bar f}'/A)$.
\newline 
So we can choose $(d_{1}',\ldots,d_{n}')$ such that
\newline
(b)  $tp(e_{\omega},c_{1},\ldots,c_{n},d_{1}',\ldots,d_{n}',{\bar f}'/A) = tp(e_{\omega}, c_{1},\ldots,c_{n}, d_{1},\ldots,d_{n},{\bar f}/A)$. 
\newline
In particular:
\newline
(c) $e_{\omega}\in acl(d_{1}',\ldots,d_{n}',A,{\bar f}')$, and 
\newline
(d)  $(c_{1},\ldots,c_{n})$ dominates $(d_{1}',\ldots,d_{n}')$ over $(A,{\bar f}')$ and $tp(d_{i}/A,{\bar f}')$ has weight $1$.
\newline
But $(c_{1},\ldots,c_{n})$ is independent from $M_{\omega}$ over $(A,{\bar f}')$, 
so by (d) we see that each $d_{i}$ is independent from $M_{\omega}$ over $(A,{\bar f}')$ whereby
\newline
(e)  each $tp(d_{i}'/M_{\omega})$ has weight $1$,  $c_{i}$ dominates $d_{i}$ over $M$, $(c_{1},\ldots,c_{n})$ 
dominates $(d_{1}',\ldots,d_{n}')$ over $M_{\omega}$, and $\{d_{1}',\ldots,d_{n}'\}$ is $M_{\omega}$-independent. 
\newline
So renaming $d_{i}'$ as $d_{i}$, we have (i) and (ii) of Proposition 2.8, as well as $e_{\omega}$ 
being algebraic over $(M_{\omega},d_{1},\ldots,d_{n})$. 
To see that the $d_{i}$ are  in $acl(M_{\omega},e_{\omega})$, we do the following. As $(c_{1},\ldots,c_{n})$ dominates $(d_{1},\ldots,d_{n})$ over $M_{\omega}$ 
we can find a copy $M'$ of the $a$-prime (so prime) model over $(M_{\omega},c_{1},\ldots,c_{n})$ which contains $(d_{1},\ldots,d_{n})$. 
By (c), $e_{\omega}\in M'$. 
By Proposition 2.7, $M'$ is also prime over $(M_{\omega},e_{\omega})$ 
so by uniqueness equals $M_{\omega + 1} = acl(M_{\omega},e_{\omega})$ so each $d_{i}\in acl(M_{\omega}, e_{\omega})$.
\end{proof}

We obtain the following ``structure theorem", which is our version  of Theorem 2 from \cite{Baldwin-Shelah}.

\begin{Proposition} 
Let $M$ be a model of $T$ containing $M_{\omega}$. Then there are $J_{1},\ldots, J_{k}$ each being a 
Morley sequence in some weight $1$ type over $M_{\omega}$  such that $M$ is the {\em algebraic closure}  of $M_{\omega}$ union the $J_{i}$. 
\end{Proposition}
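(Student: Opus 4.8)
The plan is to reduce the statement to Proposition~\ref{2.8} applied at each stage of an Ehrenfeucht-Mostowski-style construction of $M$ over $M_{\omega}$. The key point is that Proposition~\ref{2.8} gives a ``local'' decomposition: when we adjoin a single generic element $e_{\omega}$ to $M_{\omega}$, the resulting $M_{\omega+1} = acl(M_{\omega},e_{\omega})$ is already $acl(M_{\omega}, d_{1},\ldots,d_{n})$ where each $tp(d_{i}/M_{\omega})$ has weight one and the $d_{i}$ are $M_{\omega}$-independent. I would like to iterate this through a decomposition of $M$ as $acl$ of $M_{\omega}$ together with a union of indiscernible sets (Morley sequences), which is available by Proposition~1.7 together with the finite-dimensionality already established in this section.

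First I would recall that $M$, being a model containing $M_{\omega}$, is prime (and minimal) over $M_{\omega}$ together with maximal Morley sequences $E_{1},\ldots,E_{m}$ in the finitely many nonorthogonality classes of regular types over $M_{\omega}$; by the average-type analysis (Lemma~2.5 and the remarks around $p$) each of these regular types is realized by elements interalgebraic over $M_{\omega}$ with independent realizations of $p|M_{\omega}$, so in fact $M = acl(M_{\omega} \cup E)$ where $E$ is a Morley sequence in $p|M_{\omega}$ of the appropriate length. Now for each single element $e$ of $E$, Proposition~\ref{2.8} (applied with $e$ in place of $e_{\omega}$, using that $E$ is a Morley sequence in $p|M_{\omega}$, so each $tp(e/M_{\omega})=p|M_{\omega}$) produces weight-one tuples $d_{1}^{e},\ldots,d_{n}^{e}$, independent over $M_{\omega}$ and interalgebraic with $e$ over $M_{\omega}$. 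Collecting, for each fixed $i\in\{1,\ldots,n\}$, the tuples $d_{i}^{e}$ as $e$ ranges over $E$, I would argue that these form a Morley sequence $J_{i}$ in the weight-one type $tp(d_{i}/M_{\omega})$: they are pairwise $M_{\omega}$-independent because distinct elements of the Morley sequence $E$ are $M_{\omega}$-independent and the $d_{i}^{e}$ are interalgebraic with $e$ over $M_{\omega}$, and they all realize the same weight-one type over $M_{\omega}$ by the homogeneity built into Proposition~\ref{2.8}'s construction.

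Setting $k = n$ and letting each $J_{i} = (d_{i}^{e} : e \in E)$, I would then check that $M = acl\bigl(M_{\omega} \cup \bigcup_{i} J_{i}\bigr)$. The inclusion $\supseteq$ is clear since each $d_{i}^{e}\in acl(M_{\omega},e)\subseteq M$; for $\subseteq$, interalgebraicity gives $e\in acl(M_{\omega}, d_{1}^{e},\ldots,d_{n}^{e})$ for each $e\in E$, hence $E\subseteq acl(M_{\omega}\cup\bigcup_{i}J_{i})$, and since $M = acl(M_{\omega}\cup E)$ we are done.

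The main obstacle I expect is establishing that the $J_{i}$ are genuinely \emph{Morley sequences} (indiscernible and independent) rather than merely independent sets of realizations of the same type, and in particular that the weight-one type $tp(d_{i}/M_{\omega})$ can be chosen \emph{uniformly} across all $e\in E$ so that one really obtains a single indiscernible set per index $i$. This requires being careful that the construction in Proposition~\ref{2.8}, which was carried out for the single element $e_{\omega}$, can be transported along the automorphisms of $\bar M$ fixing $M_{\omega}$ that permute the Morley sequence $E$; this is where stationarity of $p|M_{\omega}$ (guaranteed by $M_{\omega}$ being an $a$-model) and the $M_{\omega}$-definability of the relevant types are used to ensure the $d_{i}^{e}$ can be chosen coherently. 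Once uniformity is arranged, indiscernibility of each $J_{i}$ over $M_{\omega}$ follows from indiscernibility of $E$ together with the interalgebraicity, and the independence from the Morley-sequence property of $E$.
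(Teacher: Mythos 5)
There is a genuine gap, and it is at the very first step: your reduction ``$M = acl(M_{\omega}\cup E)$ where $E$ is a Morley sequence in $p|M_{\omega}$ of the appropriate length'' is false for general models $M$ containing $M_{\omega}$, and this false claim is what the rest of your argument rests on. The problem is that the finitely many pairwise orthogonal regular types $q_{i}=tp(c_{i}/M_{\omega})$ can have \emph{different} dimensions in $M$ -- some may even have dimension $0$. Concretely, take $M$ to be the prime model over $M_{\omega}\cup K_{2}$ where $K_{2}$ is an infinite Morley sequence in $q_{2}$ and $q_{1}\perp q_{2}$: then $q_{1}$ is not realized in $M$ at all (any realization would be independent from $M$ over $M_{\omega}$, hence algebraic over $M_{\omega}$). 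On the other hand, every realization $e$ of $p|M_{\omega}$ is interalgebraic over $M_{\omega}$ with $(d_{1},\ldots,d_{n})$, and $c_{1}\in acl(M_{\omega},d_{1})$ realizes $q_{1}$ and forks with $e$ over $M_{\omega}$; so if $E\neq\emptyset$ then $acl(M_{\omega}\cup E)$ realizes $q_{1}$ nontrivially, while if $E=\emptyset$ it is just $M_{\omega}$. Either way $acl(M_{\omega}\cup E)\neq M$. A single Morley sequence in $p|M_{\omega}$ increases all the dimensions $\dim(q_{i},\cdot)$ in lockstep, which is exactly what an arbitrary model does not do.

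The paper's proof is organized precisely around this obstruction, and runs in the opposite direction from yours. It starts from what is actually available in an arbitrary $M$: Morley sequences $K_{i}$ in the $q_{i}$ of \emph{possibly different cardinalities} (some possibly empty), over which $M$ is prime and minimal (Fact 1.3 / Proposition 1.7). It then lifts each $c_{i,j}\in K_{i}$ to $d_{i,j}\in M$ realizing the weight-one type $r_{i}=tp(d_{i}/M_{\omega})$, using that $tp(d_{i}/M_{\omega},c_{i})$ is isolated by a fixed formula $\phi_{i}(y_{i},c_{i})$ (isolation follows from domination over the $a$-model $M_{\omega}$); fixing $\phi_{i}$ once also settles the uniformity worry you raise at the end. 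The remaining work -- which has no counterpart in your plan -- is to show $acl(M_{\omega}\cup\bigcup_{i}J_{i})$ is a model even though the $J_{i}$ have unequal lengths: one pads each $J_{i}$ to a longer Morley sequence $J_{i}'$ so that all have the same cardinality, recombines the $d_{i,\alpha}$ across $i$ into realizations of $p|M_{\omega}$ to see (via Lemma 2.2) that $acl(M_{\omega}\cup\bigcup_{i}J_{i}')$ is a model, and then cuts back down to $acl(M_{\omega}\cup\bigcup_{i}J_{i})$ by a finite-satisfiability/Tarski--Vaught argument, finally invoking minimality of $M$ over $M_{\omega}\cup\bigcup_{i}K_{i}$ to get equality rather than mere isomorphism. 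Your proposal, by assuming all dimensions can be realized by one sequence $E$, skips exactly the part of the proof that makes the proposition true for arbitrary models rather than only for suitably saturated ones.
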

\begin{proof} 
For simplicity we assume that in Proposition 2.7, the (strongly) regular types $q_{i} = tp(c_{i}/M_{\omega})$ are pairwise orthogonal. 
In Proposition 2.8, we may assume that $c_{i}$ is a subtuple of $d_{i}$ for $i=1,\ldots,n$. As $c_{i}$ dominates $d_{i}$ over $M_{\omega}$ 
it follows that $tp(d_{i}/M_{\omega},c_{i})$ is actually isolated, by formula $\phi_{i}(y_{i},c_{i})$ say ($\phi(y,z)$ over $M_{\omega}$). 
Let $r_{i} = tp(d_{i}/M_{\omega})$. Now let $M'$ be any model containing $M_{\omega}$. For $i=1,\ldots,n$, let $K_{i}$ be a Morley sequence of $q_{i}$ in $M'$. 
So as in Proposition 1.7, $M'$ is prime over $M_{\omega}\cup\bigcup_{i}K_{i}$.  Now for each $c_{i,j}\in K_{i}$, let $d_{i,j}\in M'$ be such that
$\models \phi_{i}(d_{i,j},c_{i,j})$. So $d_{i,j}$ realizes $r_{i}$ and $\{d_{i,j}:i,j\}$ is $M_{\omega}$-independent. In any case let $J_{i} = (d_{i,j})_{j}$, 
a Morley sequence in the weight $1$-type $r_{i}$, which is contained in $M'$.
\newline
{\em Claim.}  $M' = acl(M_{\omega} \cup\bigcup_{i}J_{i}))$.
\newline
{\em Proof (of claim).}  
In fact it is enough to prove that $acl(M_{\omega}\cup \bigcup_{i}J_{i})$ is a model (elementary substructure of ${\bar M}$), because it will then be prime over 
$(M_{\omega} \cup \bigcup_{i}K_{i})$ so isomorphic to $M'$ (in fact equal to $M'$ as $M'$ is not only prime but also minimal over $M_{\omega}\cup \bigcup_{i}K_{i}$ ). 
Note that in general the $K_{i}$'s may have different cardinalities for different $i=1,\ldots,n$  (in fact some $K_{i}$ may even be empty). Let $J_{i}'$ for $i=1,\ldots,n$ 
be a Morley sequence in $r_{i}$ extending $J_{i}$ such that all the $J_{i}'$ have the same cardinality $\kappa$ say. 
For each $\alpha < \kappa$, let $a_{\alpha}$ be a realization of $p|M_{\omega}$ interalgebraic with $(d_{i,_\alpha}):i=1,\ldots,n)$ 
(where $J_{i} = (d_{i,\alpha}: \alpha < \kappa)$). Then $(a_{\alpha} \alpha < \kappa)$ is a Morley sequence in $p|M_{\omega}$ 
so by Lemma 2.2  its algebraic closure over $M_{\omega}$ is a model.  But this coincides with  $acl(M_{\omega},\bigcup_{i}J_{i}')$ which is therefore a model.  
Now as $\bigcup_{i}J_{i}'$ is independent over $M_{\omega}$, for each tuple $b$ from $\bigcup_{i}J_{i}'\setminus \bigcup_{i}J_{i}$, $tp(b/M_{\omega}\cup\bigcup_{i}J_{i})$ 
is finitely satisfiable in $M_{\omega}$. So using Tarski-Vaught it follows that $acl(M_{\omega}\cup\bigcup_{i}J_{i})$ is an elementary substructure of ${\bar M}$, as required. 
\end{proof}

\begin{Remark} 
In Section 4 we give a few examples of almost indiscernible theories of infinite rank. 
But one can check by inspection that any almost indiscernible theory of abelian groups (in the group language) has finite Morley rank. 
\end{Remark} 


\section{Free Algebras}

The reader is referred to \cite{BS} for background on universal algebra (of which not much is needed). 
As mentioned before we work with algebras in a countable language (or signature) $L$. Fix a variety $V$. 
Then for any set $X$, $F_{X}$ denotes the free algebra in $V$ on generators $X$, and we call $X$ a {\em basis} of $F_{X}$. 
In general it is possible that $F_{X}$ and $F_{Y}$ are isomorphic even though $X$ and $Y$  have different cardinalities, 
so there is no well-defined notion of dimension for a free algebra.  But this can only happen if both $X,Y$ are finite. 
On the other hand it is clear that any bijection between $X$ and $Y$ extends to an isomorphism between $F_{X}$ and $F_{Y}$ 
and conversely any isomorphism between $F_{X}$ and $F_{Y}$ takes $X$ to another basis  of $F_{Y}$. 

In general if $A$ is an algebra and $X$ a subset of $A$ then $\langle X \rangle$ denotes the subalgebra of $A$ generated by $X$.

\begin{Remark} Suppose that the algebra $A$ is free on $X_{1}\cup X_{2}$, and $A_{1}$ is the subalgebra of $A$ (freely) generated by $X_{1}$. 
Let $Y_{1}$ be another basis of $A_{1}$. Then $A$ is freely generated by $Y_{1}\cup X_{2}$.
\end{Remark} 
\begin{proof} 
Let $B$ be an algebra in $V$ and $f:Y_{1}\cup X_{2} \to B$. 
So $f|Y_{1}$ extends uniquely to a homomorphism $f_{1}:A_{1}\to B$. Let $g$ be the restriction of $f_{1}$ to $X_{1}$. 
Then as $A_{1}$ is free on $X{1}$, $f_{1}$ is also the unique extension of $g$ to a homomorphism from $A_{1}$ to $B$. 
Now $g \cup f|X_{2}$ is a map from $X_{1}\cup X_{2}$ to $B$ hence extends to a homomorphism $h$ from $A$ to $B$. 
Now the restriction of $h$ to $A_{1}$ must coincide with $f_{1}$ hence the restriction of $h$ to $Y_{1}$ coincides with $f|Y_{1}$. So $h$ extends $f$. 
\end{proof}

\noindent
{\bf Assumption.} ${\cal A} = M$ is a free algebra for $V$ on a set  $I = (e_{\alpha}:\alpha< \aleph_{1})$ and is moreover $\aleph_{1}$-saturated. 

\vspace{2mm}
\noindent
So $I$ is an uncountable indiscernible set in $M$, $dcl(I) = M$ and $M$ is saturated, whereby all of section 2 applies to $T = Th(M)$.  But we will prove some results which are specific to the ``free saturated algebra" setting.

It is also not hard to see that if $I'$ is either a shrinking or stretching of $I$ to another infinite indiscernible set in the sense of $T$, 
then the algebraic closure of $I'$ (in the ambient model of $T$) is precisely $\langle I' \rangle$ and is moreover free on $I'$ in the variety $V$. 

\begin{Definition} 
We call a subset $A$ of $M$ {\em basic} if $A$ is a subset of a basis of $M$. And we call an element $a\in M$ basic if $\{a\}$ is basic.  
\end{Definition} 

So a basic element is what in the context of a free group is called a primitive element.

\begin{Lemma} 
There is a complete type $p_{0}(x)$ over $\emptyset$ such that for any $a\in M$, $a$ is basic if and only if $a$ realizes $p_{0}$. 
\end{Lemma}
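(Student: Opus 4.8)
The plan is to extract $p_0$ as the type over $\emptyset$ of any single basic element, after first checking that all basic elements realize the same type. The key observation is that $M$ is free on $I=(e_\alpha:\alpha<\aleph_1)$, and $I$ is an indiscernible set; in particular any two members of any basis should be conjugate under an automorphism of $M$. First I would set $p_0(x) = \mathrm{tp}(e_0/\emptyset)$, the common type of the basis elements (they share a type precisely because $I$ is indiscernible). The real content is the two directions of the biconditional.

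For the easy direction, suppose $a$ is basic, so $a$ lies in some basis $X$ of $M$. The point is that $\mathrm{tp}(a/\emptyset)=p_0$. Since $X$ is a basis and $I$ is a basis, I would like to move $a$ to a member of $I$ by an automorphism. Concretely, by Remark 3.1 (the exchange-of-bases remark) applied suitably, or more directly: any bijection between two bases of $M$ extends to an automorphism of $M$ as a free algebra, hence to an automorphism of the $L$-structure $M$. Pick any bijection $X\to I$ sending $a\mapsto e_0$; the induced automorphism sends $a$ to $e_0$, so $\mathrm{tp}(a/\emptyset)=\mathrm{tp}(e_0/\emptyset)=p_0$. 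Thus every basic element realizes $p_0$.

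The harder direction is the converse: if $a\in M$ realizes $p_0$, then $a$ is basic. Here the idea is that realizing $p_0=\mathrm{tp}(e_0/\emptyset)$ should force $a$ to sit inside some basis. I expect to argue as follows. Since $a$ and $e_0$ have the same type over $\emptyset$, and $M$ is saturated (hence strongly homogeneous for types over $\emptyset$ of appropriate size), there is an automorphism $\sigma$ of $M$ with $\sigma(e_0)=a$. Then $\sigma(I)$ is a basis of $M$ (the image of a basis under an automorphism of the free algebra is a basis, by the conversely-clause in the paragraph before Definition 3.2), and $a=\sigma(e_0)\in\sigma(I)$, so $a$ is basic. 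The main obstacle will be verifying that a type-preserving map extends to an automorphism in this setting—i.e.\ justifying strong homogeneity of the saturated $M$ for the single element $e_0$, which follows from $\aleph_1$-saturation together with $\omega$-stability (Proposition 2.4) so that types over $\emptyset$ are stationary-enough and realized uniquely up to automorphism; this is the step requiring care, since one must ensure the automorphism of $M$ as an $L$-structure, not merely a partial elementary map. Once strong homogeneity is in hand, both directions collapse to the single fact that automorphisms of $M$ permute bases and act transitively on the common type $p_0$.
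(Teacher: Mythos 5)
Your proof is correct and follows essentially the same route as the paper: take $p_0$ to be the common type of the elements of $I$, use the fact that a bijection between bases induces an automorphism for the forward direction, and use homogeneity of the saturated model $M$ to move $e_0$ to $a$ (so that the image of $I$ is a basis containing $a$) for the converse. The only quibble is that your appeal to $\omega$-stability for strong homogeneity is unnecessary—saturated models are strongly homogeneous purely as a consequence of saturation, which is exactly what the paper invokes.
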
 
\begin{proof} Note that all  elements of $I$ have the same type over $\emptyset$ which we take to be $p_{0}(x)$. 
Suppose first that $a$ is basic. So $a$ extends to a basis $X$ for $M$. $X$ has cardinality $\aleph_{1}$ too 
and any bijection between $X$ and $I$ induces an automorphism of $M$, so $a$ realizes $p_{0}$. 
Conversely if $a$ realizes $p_{0}$ in $M$ and $e\in I$ then there is an automorphism of $M$ taking $e$ to $a$ 
(as $M$ is saturated so homogeneous) and the image of $I$ will be a basis of $M$ containing $a$. 
\end{proof}

\begin{Remark}
As remarked above, if  $I_{0}$ is a countable subset of $I$ and $M_{0}$ is the subalgebra of $M$ generated by $I_{0}$, 
then $M_{0}$ is free on basis $I_{0}$, and is moreover an $\omega$-saturated elementary substructure of $M$. 
In particular $p_{0}$ is the type of any element of $I_{0}$ in $M_{0}$, and Lemma 3.3 also applies to $M_{0}$ with the same proof. 
\end{Remark}

\begin{Question} 
Is $p_{0}(x)$  of maximal Morley rank among complete $1$-types of $T$?
\end{Question}

\begin{Lemma} 
Suppose $a\in M$ is basic and $a$ is a term in $e_{\alpha_{1}},\ldots,e_{\alpha_{n}}$, 
then for any countable subset $C$ of  $I\setminus\{e_{\alpha_{1}},\ldots,e_{\alpha_{n}}\}$, $C\cup\{a\}$ is a basic set. 
\end{Lemma}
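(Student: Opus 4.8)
My strategy is to realize the desired basis as an automorphic image of $I$. Since $C$ is countable and $\{e_{\alpha_1},\dots,e_{\alpha_n}\}$ finite while $I$ has cardinality $\aleph_1$, I may pick $e_\gamma\in I\setminus\bigl(C\cup\{e_{\alpha_1},\dots,e_{\alpha_n}\}\bigr)$. It then suffices to produce an automorphism $\sigma$ of $M$ fixing $C$ pointwise with $\sigma(e_\gamma)=a$: for then $\sigma(I)$ is again a basis of $M$ (the automorphic image of the basis $I$, cf.\ Lemma 3.3), it contains $\sigma(e_\gamma)=a$, and it contains $\sigma(C)=C$ since $C\subseteq I$ is fixed. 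Hence $C\cup\{a\}$ lies in the basis $\sigma(I)$ and is basic. Because $M$ is saturated of cardinality $\aleph_1$, hence $\aleph_1$-homogeneous, and the partial elementary map in question has domain of size $\le\aleph_0<\aleph_1$, such a $\sigma$ exists precisely when $tp(a/C)=tp(e_\gamma/C)$. So the whole lemma reduces to this type equality.

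\textbf{Reducing the type equality to stability.} By Lemma 3.3 both $a$ and $e_\gamma$ realize $p_0$ over $\emptyset$. Working in the $\omega$-stable theory $T$, I would use that the infinite indiscernible \emph{set} $I$ is a Morley sequence over $A_0:=acl^{eq}(\emptyset)$: the canonical base of its average type $Av(I)$ is fixed by all automorphisms preserving $I$, and since the average type is definable over any $n$ of the $e_i$ (Section 2) and two disjoint finite subsets of the independent set $I$ meet only in $A_0$, one gets that $Av(I)$ is based on $A_0$. Consequently the elements of $I$ are independent over $A_0$ and all realize the stationary type $q_0:=Av(I)|A_0$. In particular $e_\gamma$ is independent from $C$ over $A_0$; and as $\{e_{\alpha_1},\dots,e_{\alpha_n}\}$ is a subset of $I$ disjoint from $C$, it too is independent from $C$ over $A_0$, whence $a\in dcl(e_{\alpha_1},\dots,e_{\alpha_n})$ is independent from $C$ over $A_0$. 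Now, \emph{if} $tp(a/A_0)=tp(e_\gamma/A_0)$, then by stationarity their unique nonforking extensions to $A_0\cup C$ coincide, and restricting to $C$ yields $tp(a/C)=tp(e_\gamma/C)$, as required.

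\textbf{The main obstacle.} The remaining and genuinely substantial point is that $a$ and $e_\gamma$ have the \emph{same strong type} over $\emptyset$, i.e.\ $tp(a/A_0)=tp(e_\gamma/A_0)$; equivalently, \emph{all} basic elements of $M$ fall into a single class of every finite $\emptyset$-definable equivalence relation $E$. Here I would exploit that a basis is an indiscernible set, so (by indiscernibility together with the finiteness of the number of $E$-classes) each basis lies entirely within one $E$-class; the task is therefore to show that distinct bases — in particular $I$ and a basis $X$ through $a$ — lie in the \emph{same} class. This is exactly the arithmetic content that is \emph{not} formal, and it is where the combination of freeness and saturation must be used: via Lemma 3.3, the transitivity of $\mathrm{Aut}(M)$ on bases, and the fact that $a$ is a term in $e_{\alpha_1},\dots,e_{\alpha_n}$ (so one may try to connect the class of $a$ to that of $e_{\alpha_1}\in I$). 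I expect this strong-type coincidence to be the crux of the argument. As a sanity check on the formulation, note the equivalent framing through Remark 3.1: proving the lemma amounts to showing that $a$ is basic already in the free factor $\langle e_{\alpha_1},\dots,e_{\alpha_n}\rangle$, after which replacing that factor's basis by one containing $a$ produces a basis of $M$ containing both $a$ and all of $I\setminus\{e_{\alpha_1},\dots,e_{\alpha_n}\}\supseteq C$; this makes $C$ disappear but isolates the same primitivity/strong-type core as the real difficulty.
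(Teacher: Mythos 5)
Your reduction to the type equality $tp(a/C)=tp(e_\gamma/C)$ is valid (homogeneity of the saturated $M$ does give the automorphism), but the proof has a genuine gap, and you have correctly located it yourself: the strong-type coincidence $tp(a/acl^{eq}(\emptyset))=tp(e_\gamma/acl^{eq}(\emptyset))$ is asserted, not proved. Worse, in the paper's development this statement (stationarity of $p_0$, Lemma 3.7) and the other fact you invoke (that $I$ is independent over $acl^{eq}(\emptyset)$, essentially Proposition 3.8) are both proved \emph{from} the present lemma, so any attempt to quote them here is circular. Your sketch that $Av(I)$ is based on $acl^{eq}(\emptyset)$ is also not a general fact about indiscernible sets in stable theories: in the theory of an equivalence relation with infinitely many infinite classes, an indiscernible set inside one class has its average type based on the (non-algebraic) class name, and the set is not independent over $acl^{eq}(\emptyset)$. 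Your justification ("two disjoint finite subsets of the independent set $I$ meet only in $A_0$") presupposes exactly the independence you are trying to derive.

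The paper's proof sidesteps all of this with a short algebraic transfer argument, and your closing "sanity check" paragraph is in fact one step away from it. Extend $e_{\alpha_1},\ldots,e_{\alpha_n}$ to a \emph{countable} subset $I_0$ of $I$ avoiding $C$, and let $M_0=\langle I_0\rangle$. By Remark 3.4, $M_0$ is free on $I_0$ and is an $\omega$-saturated elementary substructure of $M$ to which Lemma 3.3 applies. Since $a$ is basic in $M$, it realizes $p_0$ (Lemma 3.3 in $M$); since $M_0\prec M$ and $a\in M_0$, $a$ realizes $p_0$ in $M_0$; hence $a$ is basic \emph{in $M_0$}, i.e.\ extends to a basis $Y_0$ of $M_0$. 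Now Remark 3.1 (replacing the basis of a free factor) shows $Y_0\cup(I\setminus I_0)$ is a basis of $M$, and it contains both $a$ and $C$. The point you missed is that passing to a countable sub-basis, rather than the finite tuple $e_{\alpha_1},\ldots,e_{\alpha_n}$, makes the subalgebra an $\omega$-saturated elementary substructure, so the characterization "basic $\Leftrightarrow$ realizes $p_0$" transfers down for free; no stationarity, canonical bases, or independence over $acl^{eq}(\emptyset)$ is needed, and indeed those facts only become available after this lemma is in place.
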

\begin{proof}  
Extend $e_{\alpha_{1}},\ldots,e_{\alpha_{n}}$ to a countable subset $I_{0}$ of $I$, avoiding $C$. 
Let $M_{0}$ be generated by $I_{0}$. Then $a\in M_{0}$ and by Remark 3.4 is basic in $M_{0}$. 
By Remark 3.1 $\{a\}\cup C$ is basic in $M$, and also basic in the (free) algebra it generates. 
\end{proof} 

\begin{Lemma} 
$p_{0}$ is stationary (as therefore is $p_{0}^{(n)}$ for any $n$). 
\end{Lemma}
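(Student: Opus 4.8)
The plan is to use the characterization of stationarity through strong types. Since any extension of a type to the algebraic closure of its domain is nonforking, $p_{0}$ is stationary if and only if it has a unique extension to $acl^{eq}(\emptyset)$, that is, if and only if any two realizations of $p_{0}$ have the same strong type over $\emptyset$. By the finite equivalence relation theorem (available because $T$ is stable) it therefore suffices to show that for every finite $\emptyset$-definable equivalence relation $E$, meaning one with only finitely many classes, all realizations of $p_{0}$ lie in a single $E$-class.

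So fix such an $E$. The first observation is that all elements of the indiscernible set $I$ are $E$-equivalent: for distinct $\alpha,\beta$ the truth value of $E(e_{\alpha},e_{\beta})$ is constant by indiscernibility of $I$, and were it always false the infinitely many $e_{\alpha}$ would occupy infinitely many distinct classes, contradicting that $E$ has only finitely many classes. Fix one element $e_{0}\in I$.

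Now let $a$ be an arbitrary realization of $p_{0}$, that is, by Lemma 3.3, an arbitrary basic element; since $M = \langle I \rangle$ we may write $a$ as a term in $e_{\alpha_{1}},\ldots,e_{\alpha_{n}}$. By Lemma 3.6 I can choose a countable $C\subseteq I\setminus\{e_{\alpha_{1}},\ldots,e_{\alpha_{n}}\}$ so that $\{a\}\cup C$ is basic, hence a subset of a basis of $M$; since any basis is an indiscernible set (any permutation of it extends to an automorphism of the algebra), $\{a\}\cup C$ is itself an infinite indiscernible set. Applying the counting argument of the previous paragraph to this set shows that all its elements are pairwise $E$-equivalent, so $a\,E\,c$ for some $c\in C\subseteq I$, and hence $a\,E\,e_{0}$. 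As $a$ was arbitrary, every realization of $p_{0}$ is $E$-equivalent to the fixed $e_{0}$, so all realizations lie in one $E$-class. Since $E$ was arbitrary, all realizations of $p_{0}$ share the same strong type, and $p_{0}$ is stationary. The parenthetical assertion that each $p_{0}^{(n)}$ is stationary then follows from the standard fact that a nonforking product of stationary types is stationary.

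The only genuine idea here is to transport an arbitrary basic element into a common infinite indiscernible set together with elements of $I$, which is exactly what Lemma 3.6 supplies; this is the one place where I expect a (minor) subtlety, and it is precisely where that lemma is used. The reduction to finite equivalence relations and the pigeonhole counting on an indiscernible set are routine.
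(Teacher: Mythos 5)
Your proof is correct and takes essentially the same route as the paper: both reduce stationarity to all realizations of $p_{0}$ having the same strong type over $\emptyset$, and both use Lemma 3.6 to place an arbitrary basic element into a common basis (hence infinite indiscernible set) with a known realization, from which equality of strong types follows. The only cosmetic differences are that you anchor every realization to a fixed $e_{0}\in I$ and unpack the fact that elements of an infinite indiscernible set share a strong type via finite equivalence relations and pigeonhole, whereas the paper compares two arbitrary realizations $a,b$ through an intermediate element $b'$ and quotes that fact about indiscernible sets directly.
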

\begin{proof} 
We have to show that $p_{0}$ determines a unique strong type over $\emptyset$. 
So suppose $a,b$ are both realizations of $p_{0}$. So $a$ is part of a  basis $I$ of $M$ and $b$ part of a basis  $J$ of $M$. 
By Lemma 3.6  there is $b'\in J$ such that $\{a,b'\}$ is basic, namely extends to another  basis $J'$ of $M$. 
But then, as $J'$ is indiscernible in $M$, $a$ and $b'$ have the same strong type. 
As for the same reason $b$ and $b'$ have the same strong type it follows that $a$ and $b$ do too.
\end{proof} 

\begin{Proposition} 
$I$ is a Morley sequence in $p_{0}$, namely not only indiscernible but also independent over $\emptyset$. 
\end{Proposition}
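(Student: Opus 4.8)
The plan is to reduce the statement to a single nonforking assertion and then prove that assertion using $\aleph_{1}$-saturation together with the freeness of $M$. Since $I$ is already an indiscernible set and every $e_{\alpha}$ realizes the stationary type $p_{0}$ (Lemma 3.7), being a Morley sequence in $p_{0}$ is equivalent to $I$ being independent over $\emptyset$; and because $I$ is an indiscernible \emph{set}, by symmetry and finite character this is in turn equivalent to the single statement that $e_{\omega}$ is independent from $M_{\omega}$ over $\emptyset$, where $M_{\omega}=acl(\{e_{n}:n<\omega\})$. Writing $M_{0}=\langle e_{n}:n<\omega\rangle$, which by Remark 3.4 is an $\omega$-saturated elementary substructure with $M_{\omega}=acl(M_{0})$, and noting that $tp(e_{\omega}/M_{\omega})$ automatically does not fork over the model $M_{0}$, it suffices to show that $tp(e_{\omega}/M_{0})$ does not fork over $\emptyset$.

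To prove this I would compare $e_{\omega}$ with a \emph{generic} basic element. Let $p_{0}^{*}$ be the unique (by Lemma 3.7) global nonforking extension of $p_{0}$. Using $\aleph_{1}$-saturation of $M$, realize $p_{0}^{*}|M_{0}$ by some $a\in M$; by Lemma 3.3 this $a$ is basic, and $tp(a/M_{0})=p_{0}^{*}|M_{0}$ does not fork over $\emptyset$. The crucial point is to arrange $a$ so that its term-support $e_{\beta_{1}},\dots,e_{\beta_{m}}$ is disjoint from $\{e_{n}:n<\omega\}$. Granting this, Lemma 3.6 shows that $\{e_{n}:n<\omega\}\cup\{a\}$ is a basic set, while $\{e_{n}:n<\omega\}\cup\{e_{\omega}\}\subseteq I$ is basic as well. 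Extending both basic sets to bases of $M$ and using that any bijection between bases extends to an automorphism of $M$, I obtain an automorphism $\sigma$ of $M$ fixing $\{e_{n}:n<\omega\}$, hence $M_{0}=dcl(\{e_{n}:n<\omega\})$, pointwise and sending $a$ to $e_{\omega}$. Therefore $tp(e_{\omega}/M_{0})=tp(a/M_{0})$.

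The conclusion then follows immediately: since $M_{0}$ is a model, $tp(e_{\omega}/M_{0})$ is stationary, and it equals $tp(a/M_{0})=p_{0}^{*}|M_{0}$, which does not fork over $\emptyset$. Unwinding the reductions above, $e_{\omega}$ is independent from $M_{\omega}$ over $\emptyset$, and hence $I$ is independent over $\emptyset$, i.e.\ a Morley sequence in $p_{0}$, as desired.

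The step I expect to be the main obstacle is precisely the one flagged above: ensuring that the nonforking realization $a$ of $p_{0}^{*}|M_{0}$ can be chosen with term-support disjoint from $\{e_{n}:n<\omega\}$, so that Lemma 3.6 genuinely applies. This is exactly where freeness of $M$ must be used rather than mere stationarity of $p_{0}$: stationarity of $p_{0}$ by itself does not obviously force independence of an indiscernible set of realizations, which is why this result is specific to the saturated free algebra setting and relies on the abundance of generic basic elements with prescribed support. If a clean disjoint-support realization is not directly available, a natural fallback is to first produce $a$ with some finite support inside $acl(\bar I)$ and then transport the configuration by an automorphism moving $\{e_{n}:n<\omega\}$ off that support, comparing the two types via stationarity over the relevant countable submodel.
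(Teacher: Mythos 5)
You have isolated the right crux, and your reduction (via finite character, symmetry and indiscernibility, to the single statement that $e_{\omega}$ is independent from $I_{0}=\{e_{n}:n<\omega\}$ over $\emptyset$), your use of $\aleph_{1}$-saturation plus Lemma 3.3 to get a basic realization $a\in M$ of the nonforking extension of $p_{0}$ over $M_{0}$, and your basis-automorphism step are all sound. But the step you flag as the main obstacle is a genuine gap, and your fallback does not close it. A realization of $p_{0}^{*}|M_{0}$ in $M$ whose term-support is disjoint from \emph{all} of $I_{0}$ cannot be produced at this point without circularity: the natural witness is $e_{\omega}$ itself, and the assertion that $e_{\omega}$ realizes $p_{0}^{*}|M_{0}$ is exactly the proposition being proved. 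The transport idea fails because any automorphism available here is induced by a permutation of $I$, and such a map moves $I_{0}$ along with everything else: after transporting you know that $\sigma(a)$ is independent from $\sigma(M_{0})$ while its support avoids $I_{0}$ (or, symmetrically, that $a$ is independent from $M_{0}$ while its support avoids $\sigma(I_{0})$), so the independence and the support-disjointness always refer to two \emph{different} copies of $I_{0}$. Stationarity cannot bridge this: it says nonforking extensions are unique, but it can never be used to show that a given extension \emph{is} nonforking, which is what is missing.

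The missing idea --- and it is what the paper's proof does --- is that you do not need disjointness from all of $I_{0}$: disjointness from a cofinite subset is enough, and that is automatic. Take your realization $a\in M$ of $p_{0}^{*}|M_{0}$; its support meets $I_{0}$ in only finitely many elements, so let $I_{0}'\subseteq I_{0}$ be the (still infinite) complement. Lemma 3.6 now applies directly: $I_{0}'\cup\{a\}$ is basic, hence an indiscernible set with the same Ehrenfeucht--Mostowski type as $I$, so $tp(a/I_{0}')=tp(e_{\omega}/I_{0}')$. Since $a$ is independent from $I_{0}$ over $\emptyset$, it is independent from $I_{0}'$, hence so is $e_{\omega}$. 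Finally, a permutation of $I$ fixing $e_{\omega}$ and carrying $I_{0}'$ onto $I_{0}$ induces an automorphism of $M$, so $e_{\omega}$ is independent from $I_{0}$ over $\emptyset$, and your own reduction finishes the proof. Note where the automorphism is applied: only to the independence statement ``$e_{\omega}$ is independent from $I_{0}'$,'' in which every object lies in $I$, rather than to the type over $M_{0}$; insisting on preserving the whole type over $M_{0}$ is what forced your impossible disjoint-support requirement.
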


\begin{proof} 
Let $I_{0} = \{e_{\alpha}:\alpha < \omega\}$. 
Let $a$ realizes $p_{0}$ such that $a$ is independent from $I_{0}$ over $\emptyset$. 
By Lemma 3.6 we can find an infinite subset $I_{0}'$ of $I_{0}$ such that $I_{0}'\cup\{a\}$ is a 
basic. But then this is an indiscernible set with the same ``Ehrenfeucht-Mostoswki"  type as $I$. 
Hence  example $a_{\omega}$ is independent from $I_{0}$ over $\emptyset$ which is enough.
\end{proof} 

\begin{Corollary} 
In $T$, $acl^{eq}(\emptyset) = dcl^{eq}(\emptyset)$. 
\end{Corollary}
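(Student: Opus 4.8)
The plan is to show that $acl^{eq}(\emptyset) = dcl^{eq}(\emptyset)$ by exploiting the fact, just established in Proposition 3.8, that $I$ is a Morley sequence in the stationary type $p_0$ (Lemma 3.7). Recall the general principle in a stable theory: if there is an infinite Morley sequence $(a_i)_{i<\omega}$ in a stationary type over $\emptyset$, then any element $c \in acl^{eq}(\emptyset)$ is fixed by the automorphisms permuting such a sequence, and this forces $c$ to lie in $dcl^{eq}(\emptyset)$. The key input is that $\emptyset$ is the base of a long independent indiscernible set all of whose finite subsets are conjugate by automorphisms of the monster model.

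First I would let $c \in acl^{eq}(\emptyset)$ be arbitrary, so $c$ has only finitely many conjugates $c = c_1, \ldots, c_m$ under $Aut(\bar M)$. It suffices to show $m = 1$. I would use that over $\emptyset$ there is an infinite independent indiscernible set, namely $I$, with all elements realizing the stationary type $p_0$. The strong type of any finite tuple $(e_{\alpha_1}, \ldots, e_{\alpha_k})$ from $I$ is determined, since $p_0$ is stationary and its powers $p_0^{(k)}$ are stationary too. Now $c$ is definable over some finite set, and by stationarity one can find two realizations of the same strong type over $c$ that are moved to one another by an automorphism fixing $c$; more cleanly, since $p_0$ is stationary, the type $p_0$ has a unique nonforking extension to any set, so the average type of $I$ is definable over $acl^{eq}(\emptyset)$. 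The standard argument is that any $c \in acl^{eq}(\emptyset)$ lies in $dcl^{eq}$ of an infinite Morley sequence in a stationary type, hence in the definable closure of the canonical parameters of that sequence, which are in $dcl^{eq}(\emptyset)$.

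The cleanest route uses the following fact: in a stable theory, if $p$ is stationary over $A$ and $(a_i)_{i<\omega}$ is a Morley sequence in $p$, then $Cb(p) \subseteq dcl^{eq}(a_i : i < \omega)$, and moreover $acl^{eq}(A) \cap dcl^{eq}(a_i : i<\omega) \subseteq dcl^{eq}(A)$ fails in general, so instead I would argue directly with indiscernibility. Since $I$ is indiscernible over $\emptyset$ and any $c \in acl^{eq}(\emptyset)$ is fixed setwise-into-finitely-many-conjugates, consider the action of the group of permutations of $I$: each such permutation extends to an automorphism of $\bar M$ fixing $\emptyset$, hence permutes $\{c_1, \ldots, c_m\}$. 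But an infinite symmetric group has no nontrivial finite quotient acting faithfully in the relevant sense — more precisely, for any finitely many $c_j$, by homogeneity and the fact that $I$ is an \emph{independent} indiscernible set, $c$ is definable over $\emptyset$ because the nonforking (average) type of $I$ is stationary and hence $c$ is invariant.

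The main obstacle is making the last step rigorous: I would need to invoke the standard result that a \emph{set} of indiscernibles which is also independent over $\emptyset$ has its canonical base (equivalently, the average/nonforking type is stationary over $\emptyset$), and then cite that $c \in acl^{eq}(\emptyset)$ together with stationarity of all the $p_0^{(n)}$ forces $c \in dcl^{eq}(\emptyset)$ via the uniqueness of nonforking extensions. Concretely, I expect the proof to reduce to: if $c \in acl^{eq}(\emptyset) \setminus dcl^{eq}(\emptyset)$ there are two distinct conjugates $c_1, c_2$ with $tp(c_1) = tp(c_2)$, but then some finite tuple from the Morley sequence $I$ witnesses a forking or a strong-type splitting that contradicts the stationarity of $p_0^{(n)}$ established in Lemma 3.7. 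The delicate point is correctly quoting the general stability-theoretic lemma relating $acl^{eq}(\emptyset)$, $dcl^{eq}(\emptyset)$, and the existence of an independent indiscernible sequence over $\emptyset$; this is where I would lean on the miscellaneous facts in Section 4 of Chapter 1 of \cite{Pillay-GST}.
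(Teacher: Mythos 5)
Your proposal has a genuine gap, and it sits exactly where the free-algebra hypothesis has to enter. The statement you call a ``general principle'' or ``standard argument'' --- that the mere existence of an infinite Morley sequence in a stationary type over $\emptyset$ forces $acl^{eq}(\emptyset) = dcl^{eq}(\emptyset)$, and in particular that any $c \in acl^{eq}(\emptyset)$ lies in $dcl^{eq}$ of such a sequence --- is false in general: in $ACF_{0}$ the generic type over $\emptyset$ is stationary and has Morley sequences (independent transcendentals), yet $i \in acl^{eq}(\emptyset)\setminus dcl^{eq}(\emptyset)$, and $i$ is not definable over any set of independent transcendentals. What is true here, and what the paper's proof uses as its very first step, is specific to the setting of Section 3: $M$ is generated as an algebra by $I$, so $M = dcl(I)$, and $M$ is an elementary submodel of $\bar M$, so $acl^{eq}(\emptyset) \subseteq M^{eq} \subseteq dcl^{eq}(I)$; hence any $a \in acl^{eq}(\emptyset)$ lies in $dcl^{eq}(\bar e)$ for some \emph{finite} tuple $\bar e$ from $I$. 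Nothing in your argument invokes this, so nothing in it distinguishes the present situation from the $ACF_{0}$ counterexample.

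Compounding this, you dismiss as ``fails in general'' precisely the (true) lemma that finishes the proof: if $c \in acl^{eq}(\emptyset)$ and $c \in dcl^{eq}(\bar e)$ where $tp(\bar e/\emptyset)$ is stationary, then $c \in dcl^{eq}(\emptyset)$. Indeed, by stationarity all realizations of $tp(\bar e)$ have the same strong type, so all realizations of $tp(c)$ (which are images of realizations of $tp(\bar e)$ under the defining function) have the same strong type, i.e.\ $tp(c)$ is stationary; an algebraic stationary type has a unique realization, since distinct elements of $acl^{eq}(\emptyset)$ have distinct strong types. This is exactly the paper's proof: $a \in dcl^{eq}(\bar e)$ for finite $\bar e$ from $I$; $tp(\bar e) = p_{0}^{(n)}$ is stationary by Lemma 3.7 together with Proposition 3.8 (independence of $I$ is needed so that $\bar e$ realizes the product type); hence $tp(a)$ is stationary and algebraic, so $a \in dcl^{eq}(\emptyset)$. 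Your fallback argument via permutations of $I$ and the infinite symmetric group is left vague --- you say yourself that making it rigorous is ``the main obstacle'' --- and it cannot be repaired without the first step above, since automorphisms extending permutations of $I$ only control elements of $dcl^{eq}(I)$, and the whole point is to know that $acl^{eq}(\emptyset)$ lies inside $dcl^{eq}(I)$.
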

\begin{proof} 
Suppose $a\in acl^{eq}(\emptyset)$. Then $a\in dcl^{eq}({\bar e})$ for some finite tuple from $I$. 
But by Lemma 3.7 and Proposition 3.8, $tp({\bar e}/\emptyset)$ is stationary, whereby $tp(a/\emptyset)$ is stationary whereby $a\in dcl^{eq}(\emptyset)$. 
\end{proof}

We can prove in a similar manner.

\begin{Remark} 
Let $E$ be any subset of $I$ (or in fact any basic set). Then $acl^{eq}(E) =dcl^{eq}(E)$. 
\end{Remark}

\begin{Proposition} 
If $\bar a$, $\bar b$ are tuples from $M$. Then $\bar a$ is independent from $\bar b$ over $\emptyset$ if and only if 
there is a basis $B_{1}\cup B_{2}$ of $M$ such that ${\bar a}$ is contained in $\langle B_{1}\rangle$ and $\bar b$ is contained in  $\langle B_{2}\rangle$.
\end{Proposition}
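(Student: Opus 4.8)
The plan is to prove the two directions separately, the backward one being routine and the forward one requiring the bulk of the work. For the backward direction, suppose we are given a basis $B_{1}\cup B_{2}$ with $\bar a\in\langle B_{1}\rangle$ and $\bar b\in\langle B_{2}\rangle$. Any basis of $M$ is the image of $I$ under an automorphism (a bijection between bases extends to an automorphism), so by Proposition 3.8 it is a Morley sequence in $p_{0}$; in particular the disjoint subsets $B_{1}$ and $B_{2}$ are independent over $\emptyset$. Since $\bar a\in\langle B_{1}\rangle\subseteq acl(B_{1})$ and $\bar b\in\langle B_{2}\rangle\subseteq acl(B_{2})$, monotonicity of nonforking gives that $\bar a$ is independent from $\bar b$ over $\emptyset$.

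For the forward direction I would first record two facts. By Corollary 3.9 we have $acl^{eq}(\emptyset)=dcl^{eq}(\emptyset)$, and hence every complete type over $\emptyset$ is \emph{stationary}; this stationarity will be the linchpin of the argument. Secondly, exactly as in Lemma 3.3 (using saturation and homogeneity of $M$) a finite tuple is basic, i.e.\ part of a basis, if and only if it realizes the type over $\emptyset$ of a tuple of distinct members of $I$; so being basic is a property of the type over $\emptyset$.

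The hard direction then splits into two moves. \emph{First}, I would replace the support of $\bar a$ by a basic set independent from $\bar b$: if $A_{0}\subseteq I$ is finite with $\bar a\in\langle A_{0}\rangle$, realize $tp(A_{0}/\bar a)$ by a tuple $A$ with $A$ independent from $\bar b$ over $\bar a$ (possible in the $\aleph_{1}$-saturated $M$, as a nonforking extension over the finite set $\bar a\bar b$). Then $A$ is basic and $\bar a\in\langle A\rangle$ (both being expressed by $tp(A_{0}/\bar a)$, including the term identity $\bar a=t(A_{0})$), while the hypothesis that $\bar a$ is independent from $\bar b$ together with forking calculus yields that $A$ is independent from $\bar b$ over $\emptyset$. \emph{Second}, extend $A$ to a basis $A\cup C_{0}$ and set $N=\langle C_{0}\rangle$; since $C_{0}$ is an infinite basic set, $N\preceq M$ and $N=acl(C_{0})=dcl(C_{0})$ by Remark 3.10, and as $A$ is independent from $C_{0}$ over $\emptyset$ (disjoint parts of a Morley sequence) we get that $A$ is independent from $N$ over $\emptyset$. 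Now realize the stationary type $tp(\bar b/\emptyset)$ by some $\bar b'\in N$; since every tuple of $N$ is independent from $A$ over $\emptyset$, both $tp(\bar b/A)$ and $tp(\bar b'/A)$ are nonforking extensions of $tp(\bar b/\emptyset)$, so by stationarity $\bar b'$ realizes $tp(\bar b/A)$. Finally, strong homogeneity of the saturated $M$ gives $\sigma\in Aut(M/A)$ with $\sigma(\bar b')=\bar b$; then $\bar b\in\langle\sigma(C_{0})\rangle$ and $A\cup\sigma(C_{0})=\sigma(A\cup C_{0})$ is a basis, so $B_{1}=A$ and $B_{2}=\sigma(C_{0})$ work.

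The main obstacle is the forward direction, and specifically the \emph{separation of supports}: a priori $\bar a$ and $\bar b$ are terms in overlapping generators, and independence must be leveraged to re-coordinatize $M$ so that the two supports become disjoint. The device that makes this possible is the combination of the first move (choosing a generating basic set for $\bar a$ that is genuinely independent from $\bar b$, not merely independent from the value $\bar a$) and the realization step, where stationarity of types over $\emptyset$ lets a realization of $tp(\bar b/\emptyset)$ harvested from the complementary free subalgebra $\langle C_{0}\rangle$ automatically realize the full type $tp(\bar b/A)$. I expect the routine but slightly delicate points to be the verification that ``basic'' is type-definable over $\emptyset$ for tuples and that the relevant realizations can be found inside $M$ rather than in a larger elementary extension.
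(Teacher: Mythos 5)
Your proof is correct --- both directions go through, and the delicate points you flag (the tuple version of Lemma 3.3, realizing the needed types inside $M$) are handled soundly --- but your route through the forward direction is genuinely more laborious than the paper's, and the comparison is instructive. The paper's entire converse is one syntactic trick: since $M = \langle I \rangle$, take $\bar a, \bar b$ both in $\langle e_{1},\ldots,e_{n}\rangle$, write $\bar a = \bar t(e_{1},\ldots,e_{n})$, and set $\bar a' = \bar t(e_{n+1},\ldots,e_{2n})$, the same terms evaluated at fresh generators. Indiscernibility gives $tp(\bar a') = tp(\bar a)$, the independence of $\bar a'$ from $\bar b$ is automatic since they lie in subalgebras generated by disjoint parts of the basis $I$, stationarity (Corollary 3.9) then yields $tp(\bar a', \bar b) = tp(\bar a, \bar b)$, and one automorphism carries the visible decomposition $\{e_{n+1},\ldots,e_{2n}\}\cup(I\setminus\{e_{n+1},\ldots,e_{2n}\})$ to the required basis $B_{1}\cup B_{2}$. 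You reach the same final configuration from the opposite side: instead of copying $\bar a$ into a fresh free factor, you re-coordinatize the support of $\bar a$ (your basic set $A$, obtained as a nonforking extension realized by saturation) and then copy $\bar b$ into the complementary subalgebra $\langle C_{0}\rangle$, applying stationarity over the base $A$ rather than over $\emptyset$. That costs you machinery the paper never touches: the type-characterization of basic tuples, the elementarity of $\langle C_{0}\rangle$ in $M$, and a round of forking calculus to make $A$ independent from $\bar b$. Nothing is wrong with any of it, but the paper's version shows exactly what freeness buys: because $M = dcl(I)$ with every element a term in the generators, the independent copy can simply be written down, and the ``separation of supports'' that you single out as the main obstacle dissolves into the choice of $\bar a'$.
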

\begin{proof} 
Right implies left is clear as by a basis (or basic subset) of $M$ is independent over $\emptyset$.
For the converse. Suppose $\bar a$ and $\bar b$ are independent over $\emptyset$. 
Without loss, $\bar a$, $\bar b$  are both terms in $e_{1},\ldots,e_{n}$ and write $\bar a = \bar t(e_{1},\ldots,e_{n})$. Let ${\bar a}' {\bar t}(e_{n+1},\ldots,e_{2n})$. 
Then ${\bar a}'$ is independent from ${\bar b}$ and $tp(\bar a) = tp({\bar a}')$. By stationarity $tp({\bar a},{\bar b}) = tp({\bar a}',{\bar b})$, 
so by automorphism we can find the suitable basis. 
\end{proof}

\begin{Remark}
Proposition 3.11  extends naturally to describing independence {\em over} any  basic set $B$.
\end{Remark}

The interested reader can consult \cite{PerSkl} for the analogous result for noncommutative free groups. 
As a matter of fact our proof is a straight adaptation of the proof there.

\begin{Question}  Let  $T$ be the theory of  saturated free algebra. 
\begin{itemize}
\item[(i)]Must $T$ have finite Morley rank?
\item[(ii)] Must $T$ be $1$-based.
\end{itemize}
\end{Question}

Given (i), one could prove (ii) by showing that inside suitable strongly minimal sets, 
algebraic closure equals definable closure, so we have "unimodularity" so one-basedness.
Probably (i) is easy for saturated free algebras in a variety of $R$-modules.

One could also specialize to the context where $V$ is a variety of groups (in the language of groups including an inverse function).

\begin{Question} Suppose $V$ is a variety of groups and $G$ is free in $V$ as well as being saturated.
Is $G$ commutative and of finite Morley rank (in which one can explicitly list the possibilities).
\end{Question}

\begin{Remark} 
In the context where $T$ is the theory of a saturated free group $G$  in a variety of groups, 
we have that  $G$ is connected and $p_{0}$ is the generic type over $\emptyset$. 
This follows as in the case of the free group (see \cite{Pillay-freegroup}). 
\end{Remark}

\section{Examples}

We typically work with one-sorted structures, where the relevant indiscernible set witnessing slmost indiscernibility is a set of $n$-tuples for some $n$ (rather than working in a many sorted theory $T^{eq}$). 

\begin{Example} Any almost strongly minimal theory is almost indiscernible ( after adding additional parameters to witness the almost strong minimality).
\end{Example}

The next two examples give almost indiscernible theories of infinite rank (which we conjectured could not happen for the theory of a saturated free algebra)

\begin{Example} 
Consider the theory $T$ of infinitely many disjoint infinite unary predicates $P_{1}, P_{2},\ldots$ equipped with, for each $n$, 
a bijection $f_{n}$ between $P_{1}^{n}$ and $P_{n}$.  The theory is complete. $P_{n}$ has Morley rank $n$, 
whereby the Morley rank of the universe $x=x$ is $\omega$. $P_{1}$ is an indiscernible set. Let $q(x)$ be the ``type at infinity":  $\{\neg P_{n}(x): n=1,2,\ldots\}$. 
Then $q$ is complete with $U$-rank $1$ (and Morley rank $\omega$), and its set of realizations in any model is also an indiscernible set. 
Let $M$ be a $\kappa$-saturated model of $T$ of cardinality $\kappa$. Let $(a_{i}:i<\kappa)$ be an enumeration of $P_{1}$ in $M$, 
and let $(b_{i}:i<\kappa)$ be an enumeration of the srt of realizations of $q$ in $M$.
Then $((a_{i}b_{i}):i<\kappa)$ is an indiscernible set in $M$ whose definable closure is precisely $M$. 
This is the simplest example of an almost indiscernible theory with Morley rank of $x=x$ being infinite. 
\end{Example} 

\begin{Example} 
This is a kind of group version of the above. Let $T$ be the theory of $\Q$-vector spaces equipped with a new predicate $P$ for an infinite $\Q$-linearly independent set. 
Then $T$ is complete, and $\omega$-stable. The predicate $P$ is strongly minimal (and its solution set in any model is an indiscernible set). $nP = P+ \ldots + P$ (n-times) 
has Morley rank $n$. The formula $x=x$ has Morley rank $\omega$ again, but also $U$-rank $\omega$: 
Let $q$ be the type saying $\{x\notin nP: n=1,2,\ldots\}$ . Then $q$ has $U$-rank and Morley rank $\omega$. 
Let $M$ be a $\kappa$-saturated model of $T$. 
Let $P(M) = (a_{i}:i<\kappa)$ and let $(b_{i}:i<\kappa)$ be a maximal independent set of realizations of $q$ in $M$. 
Then $M$ is in the definable closure of $((a_{i}b_{i}):i:\kappa)$. 

The same thing can be done with the theory of algebraically closed fields in place of $\Q$-vector spaces.
\end{Example}

\begin{Example} This is actually a nonexample.  Let $T$ be theory  of $({\Z}_{p^{\infty}}^{\omega}, +)$, 
where ${\Z}_{p^{\infty}}$ is the group of roots of unity of order a power of $p$.  So $T$ is a theory of 
abelian groups of Morley rank $\omega$. 
A  $\kappa$-saturated model $M$ of $T$ is of the form $({\Z}_{p^{\infty}})^{\kappa} \oplus {\Q}^{\kappa}$. 
$M$ is {\em not} in the algebraic closure of an indiscernible set of finite tuples of cardinality $\kappa$. 
But it {\em is} visibly in the definable closure of an indiscernible set of $\omega$-tuples of cardinality $\kappa$.

\end{Example}

It is easy to produce an almost indiscernible theory, witnessed by an infinite indiscernible set $I$ such that $tp(a/\emptyset)$, for $a\in I$ is not stationary. 
For example the theory of an equivalence relation with two classes, both infinite. But we would like an example 
of an almost indiscernible $\aleph_{1}$-categorical theory, such that  there is no Morley sequence (so infinite, indiscernible, and independent) witnessing almost indiscernibility.

\end{document}